\DeclareSymbolFont{SY}{U}{psy}{m}{n}
\DeclareMathSymbol{\emptyset}{\mathord}{SY}{'306}
\newcommand{\slv}{{\textsl{v}}}
\renewcommand{\eqref}[1]{{\rm(\ref{#1})}}
\newcommand{\bbC}{{\mathbb C}}
\newcommand{\bbR}{{\mathbb R}}
\newcommand{\bbN}{{\mathbb N}}
\newcommand{\cB}{{\mathcal B}}
\newcommand{\cG}{{\mathcal G}}
\newcommand{\conv}{\mathop{\rm conv}}
\newcommand{\rl}{{\mathit{\,\,l}}}
\newcommand{\rr}{{\mathit{\,r}}}
\newcommand{\sE}{{\sf E}}
\newcommand{\vrho}{\varrho}
\newcommand{\fA}{\mathfrak{A}}
\newcommand{\fH}{\mathfrak{H}}
\newcommand{\fL}{\mathfrak{L}}
\newcommand{\fM}{\mathfrak{M}}
\newcommand{\fN}{\mathfrak{N}}
\newcommand{\dist}{\mathop{\rm dist}}
\newcommand{\lal}{{\langle}}
\newcommand{\ral}{{\rangle}}
\newcommand{\be}{\begin{equation}}
\newcommand{\ee}{\end{equation}}
\DeclareMathOperator{\spec}{spec}
\DeclareMathOperator*{\slim}{{\mathit s}-lim}
\newcommand{\Ran}{\mathop{\mathrm{Ran}}}
\newcommand{\Dom}{\mathop{\mathrm{Dom}}}
\newcommand{\Ker}{\mathop{\mathrm{Ker}}}
\numberwithin{equation}{section}
\newtheorem{introtheorem}{Theorem}
\newtheorem{theorem}{Theorem}[section]
\newtheorem{lemma}[theorem]{Lemma}
\newtheorem{hypothesis}[theorem]{Hypothesis}
\theoremstyle{definition}
\theoremstyle{remark}
{\it}{\rm}
\newtheorem{remark}[theorem]{Remark}
\newtheorem{example}[theorem]{Example}
\begin{document}

\title[The a priori tan$\,\Theta$ theorem]
{The a priori tan\,$\Theta$ theorem for spectral\\ subspaces$^*$\footnote{$^*$\lowercase{ar}X\lowercase{iv}:1012.1569;
\textit{I\lowercase{ntegral} E\lowercase{quations and} O\lowercase{perator} T\lowercase{heory}},
DOI:\,10.1007/s00020-012-1976-6
(the journal version is available online from http://dx.doi.org/10.1007/s00020-012-1976-6)}}

\author[S. Albeverio]{Sergio Albeverio}
\address{%
Institut f\"ur Angewandte Mathematik and HCM\\
Uni\-ver\-si\-t\"at Bonn\\
Endenicher Allee 60\\
D-53115 Bonn, Germany}
\email{albeverio@uni-bonn.de}

\author[A. K. Motovilov]{Alexander K. Motovilov}
\address{%
Bogoliubov Laboratory of Theoretical Physics\\
JINR, Joliot-Cu\-rie 6\\
141980 Dubna, Moscow
Region, Russia}
\email{motovilv@theor.jinr.ru}

\begin{abstract}
Let $A$ be a self-adjoint operator on a separable Hilbert space
$\fH$. Assume that the spectrum of $A$ consists of two disjoint
components $\sigma_0$ and $\sigma_1$ such that the set $\sigma_0$
lies in a finite gap of the set $\sigma_1$. Let $V$ be a bounded
self-adjoint operator on $\fH$ off-diagonal with respect to the
partition $\spec(A)=\sigma_0\cup\sigma_1$. It is known that if
\mbox{$\|V\|<\sqrt{2}d$}, where $d=\dist(\sigma_0,\sigma_1)$, then
the perturbation $V$ does not close the gaps between $\sigma_0$ and
$\sigma_1$ and the spectrum of the perturbed operator $L=A+V$
consists of two isolated components $\omega_0$ and $\omega_1$
originating from $\sigma_0$ and $\sigma_1$, respectively. Furthermore,
it is known that if $V$ satisfies the stronger bound $\|V\|< d$ then
for the difference of the spectral projections $\sE_A(\sigma_0)$ and
$\sE_{L}(\omega_0)$ of $A$ and $L$ associated with the spectral sets
$\sigma_0$ and $\omega_0$, respectively,
the following sharp norm estimate
holds:
$$
\|\sE_A(\sigma_0)-\sE_{L}(\omega_0)\| \leq
\sin\left(\arctan\frac{\|V\|}{d}\right).
$$
In the present work we
prove that this estimate remains valid and sharp also for\,\,
\mbox{$d\leq \|V\|<\sqrt{2}d$}, which completely settles the issue.
\end{abstract}

\subjclass{47A15, 47A62, 47B25}

\keywords{Subspace perturbation problem, tan\,$\theta$ theorem, operator Riccati equation,
Davis-Kahan, off-diagonal perturbation}

\maketitle

\section{Introduction}
\label{SIntro}

An important problem in the perturbation theory of self-adjoint operators
is to study the variation of the spectral subspace associated with an
isolated spectral subset that is subject to a perturbation (see,
e.g., \cite{K}). Classical trigonometric estimates in subspace
perturbation problem have been established by Davis and Kahan
\cite{DK70}. For further results on subspace variation bounds
for self-adjoint operators we refer to \cite{AM2011},
\cite{GKMV2010}, \cite{Li2000}, \cite{MakS10}, \cite{MotSel} and
the references therein.

In this article we consider a self-adjoint operator $A$ on a
separable Hilbert space $\fH$, assuming that the spectrum of $A$
consists of two disjoint components $\sigma_0$ and $\sigma_1$ such
that the set $\sigma_0$ lies in a finite gap of the set $\sigma_1$.
In other words, we suppose that
\begin{equation}
\label{tant}
\overline{\conv(\sigma_0)}\cap\overline{\sigma}_1=\emptyset
\quad\text{and}\quad
\sigma_0\subset\conv(\sigma_1),
\end{equation}
where $\mathop{\mathrm{conv}}$ denotes the convex hull and
overlining means closure. The perturbations $V$ are assumed to be
bounded and off-diagonal with respect to the partition
$\spec(A)=\sigma_0\cup\sigma_1$, that is, $V$ should anticommute
with the difference $\sE_A(\sigma_{0})-\sE_A(\sigma_{1})$ of the
spectral projections $\sE_A(\sigma_{0})$ and $\sE_A(\sigma_{1})$ of
$A$ associated with the sets $\sigma_0$ and $\sigma_1$,
respectively. For the spectral disposition \eqref{tant}, it has been
proven in \cite{KMM3} (see also \cite{Tr2009,Tretter-Book}) that the
gaps between $\sigma_0$ and $\sigma_1$ remain open if the
off-diagonal self-adjoint perturbation $V$ satisfies the (sharp)
condition
\begin{equation}
\label{V2d} \| V \| < \sqrt{2}\,\,d,
\end{equation}
where $ d:=\dist(\sigma_0,\sigma_1) $ stands for the distance
between $\sigma_0$ and $\sigma_1$. Under this condition the spectrum
of the perturbed operator $L=A+V$ consists of two isolated
components $\omega_0\subset\Delta$ and
$\omega_1\subset\bbR\setminus\Delta$. Here and in the sequel,
$\Delta$ denotes the finite gap of $\sigma_1$ that contains
$\sigma_0$. (We recall that by a finite gap of a closed set
$\sigma\subset\bbR$ one understands an open bounded interval on
$\bbR$ that does not intersect this set but both ends of which
belong to $\sigma$.) It is worth noting that the norm
bo\-und~\eqref{V2d} is also optimal in the sense that, if it is
violated, the spectrum of $L$ in the gap $\Delta$ may be completely
empty (see \cite[Example~1.6]{KMM4}).

The goal of the present paper consists in finalizing a sharp norm
estimate on the variation of the spectral subspace
$\Ran\bigl(\sE_A(\sigma_0)\bigr)$ under off-diagonal self-adjoint
perturbations that was conjectured and partly proven in
\cite{MotSel}. Our main result is as follows.

\begin{introtheorem}
\label{Th1} Given a (possibly unbounded) self-adjoint operator $A$
on a separable Hilbert space $\fH$, assume that its spectrum
consists of two disjoint components $\sigma_0$ and $\sigma_1$
satisfying condition \eqref{tant}. Let $V$ be a bounded self-adjoint
operator on $\fH$ off-diagonal with respect to the partition
$\spec(A)=\sigma_0\cup\sigma_1$ and set $L=A+V$, $\Dom(L)=\Dom(A)$. Assume in addition that $V$
satisfies the bound \eqref{V2d} and let $\omega_0=\spec(L)\cap\Delta$.
Then the difference between the spectral projections $\sE_A(\sigma_0)$ and
$\sE_{L}(\omega_0)$ of $A$ and $L$ associated with the respective spectral sets
$\sigma_0$ and $\omega_0$ satisfies
the norm estimate
\begin{equation}
\label{edif1} \|\sE_{A}(\sigma_0)- \sE_{L}(\omega_0)\| \leq \sin
\left( \arctan \frac{\|V\|}{d}\right)
\quad\biggl(<\sqrt{\text{\footnotesize$\frac{2}{3}$}}\,\,\biggr).
\end{equation}
\end{introtheorem}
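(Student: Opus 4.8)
My plan is to recast the projection estimate \eqref{edif1} as an a priori bound on the operator angle between the two spectral subspaces, and then prove that bound by a global continuation in the coupling strength. Writing $P:=\sE_A(\sigma_0)$ and $Q:=\sE_L(\omega_0)$ and decomposing $\fH=\Ran P\oplus\Ran P^{\perp}$, the off-diagonality of $V$ gives $A=\diag(A_0,A_1)$ with $\spec(A_0)=\sigma_0$, $\spec(A_1)=\sigma_1$, while $V$ has purely off-diagonal blocks $B$ and $B^{*}$. As long as $\Ran Q$ is a graph over $\Ran P$, that is $\Ran Q=\{x+Xx:x\in\Ran P\}$ for a bounded angular operator $X\colon\Ran P\to\Ran P^{\perp}$, this operator solves the Riccati equation $XA_0-A_1X+XBX=B^{*}$, and the elementary identity $\|P-Q\|=\sin(\arctan\|X\|)$ shows that \eqref{edif1} is equivalent to the sharp a priori estimate $\|X\|\le\|V\|/d$ on the Riccati solution. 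Since the asserted right-hand side is $<\sqrt{2/3}<1$, a continuity argument (below) secures the graph property throughout, so it suffices to prove $\|X\|\le\|V\|/d$.

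The difficulty is that this tan-type bound is strictly stronger than the sin-type bound $\|(I-P)Q\|\le\|V\|/\dist(\sigma_1,\omega_0)$ furnished by the standard Davis--Kahan commutator argument, which is not sharp here. Worse, in the present regime $d\le\|V\|<\sqrt2\,d$ the maximal angle may exceed $\pi/4$ (equivalently $\|X\|$ may exceed $1$), so the contraction/fixed-point reasoning that establishes $\|X\|\le\|V\|/d$ for $\|V\|<d$ in \cite{MotSel} collapses: the Riccati iteration no longer converges, and the effective operators $A_0+BX$ and $A_1-XB$ that govern the perturbed subspace are non-self-adjoint, so their resolvents are not controlled by the spectral gap alone. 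A genuinely non-perturbative argument is therefore needed.

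To obtain such an argument I would introduce the coupling $L_t=A+tV$, $t\in[0,1]$, and track the perturbed projection $Q_t=\sE_{L_t}(\omega_0(t))$ together with its maximal rotation angle $\theta_t$ relative to $\Ran P$, so that $\tan\theta_t=\|X_t\|$ and $\theta_0=0$. By \cite{KMM3} the components $\omega_0(t)\subset\Delta$ and $\omega_1(t)\subset\bbR\setminus\Delta$ stay disjoint for all $t$ under \eqref{V2d}, so a bootstrap keeps $\theta_t<\pi/2$ and the graph property intact. Differentiating the Riesz projection, one is led to a rotation-rate estimate of the form $\dot\theta_t\le\frac{\|V\|}{d_t}\cos\theta_t$, where $d_t:=\dist(\omega_0(t),\omega_1(t))$; the factor $\cos\theta_t$ records that only the part of $V$ off-diagonal with respect to the \emph{current} subspace drives the rotation. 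Coupling this with a spectral enclosure $d_t\cos\theta_t\ge d$ gives $\dot\theta_t\le\frac{\|V\|}{d}\cos^2\theta_t$, hence $\frac{d}{dt}\tan\theta_t=\dot\theta_t/\cos^2\theta_t\le\|V\|/d$, and integrating over $[0,1]$ yields exactly $\|X\|=\tan\theta_1\le\|V\|/d$, as required.

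The main obstacle is this quantitative two-sided enclosure $d_t\cos\theta_t\ge d$, asserting that the perturbed gap never contracts below $d/\cos\theta_t$. Unlike the rotation-rate inequality, it is not a soft consequence of self-adjoint functional calculus: it uses essentially that $\sigma_0$ lies in a finite gap of $\sigma_1$, so that $\sigma_1$ has spectral mass on \emph{both} sides of $\Delta$, and it must be sharp to yield the optimal constant. Finally, I would confirm sharpness on the finite-dimensional model $A=\diag(-d,0,d)$ with $\sigma_0=\{0\}$, $\sigma_1=\{\pm d\}$ and $V$ coupling the middle vector symmetrically to the two outer ones: a direct computation shows the gap eigenvalue remains at $0$, that $\omega_1=\{\pm\sqrt{d^2+\|V\|^2}\}$, and that the eigenvector in $\Delta$ makes angle $\arctan(\|V\|/d)$ with $\Ran P$, so $\|P-Q\|=\sin(\arctan(\|V\|/d))$ with equality for every $\|V\|<\sqrt2\,d$.
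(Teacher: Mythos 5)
Your reduction of \eqref{edif1} to the a priori Riccati bound $\|X\|\le\|V\|/d$, via the graph representation of $\Ran\bigl(\sE_L(\omega_0)\bigr)$ and the identity $\|\sE_A(\sigma_0)-\sE_L(\omega_0)\|=\sin(\arctan\|X\|)$, is exactly the framework the paper uses, and your closing $3\times 3$ example is the one invoked in Remark \Ref{Rem2Sharp} to establish sharpness. The continuation-in-$t$ argument you propose for the core estimate, however, has a genuine gap: the two inequalities that carry the entire content of the theorem --- the rotation-rate bound $\dot\theta_t\le(\|V\|/d_t)\cos\theta_t$ and the gap enclosure $d_t\cos\theta_t\ge d$ --- are asserted, not proved, and you yourself flag the second as ``the main obstacle''. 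Neither follows from what you have set up. Differentiating the Riesz projection of $L_t$ controls $\|\dot Q_t\|$ in terms of $\|V\|$ and $d_t$ only with non-sharp constants, and upgrading that to a bound on the derivative of the \emph{maximal} angle $\theta_t$ (which, being defined through the operator norm $\|X_t\|$, is in general merely Lipschitz in $t$) with the extra factor $\cos\theta_t$ is precisely the sharp tan-theorem content restated infinitesimally, so asserting it begs the question. The enclosure $d_t\cos\theta_t\ge d$ is likewise a quantitative claim about how far the perturbed spectrum may migrate toward the edges of $\Delta$ relative to the accumulated rotation; it holds with equality in your symmetric example, but nothing in your outline produces it in general.

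By contrast, the paper obtains the bound from a purely algebraic identity: since $\cG(X)$ and $\cG(-X^*)$ are orthogonal reducing subspaces for $L$, one has $\lal Lx,Ly\ral=0$ for suitable vectors $x,y$ built from an eigenvector $u$ of $|X|$, which yields identity \eqref{Id3} of Lemma \Ref{Lcr}; this expresses $\|X\|^2$ as a ratio of spectral quantities and gives the norm bound of Theorem \Ref{ThMainM3} whenever $|X|$ has an eigenvalue equal to $\|X\|$, in particular without any contraction assumption on $X$. The general case is then reached in Theorem \Ref{ThTot} by finite-rank truncation of $A_0$ and $B$ together with strong resolvent convergence --- a step your scheme would also need in some form, since for infinite-dimensional $\fA_0$ the norm $\|X_t\|$ need not be attained and the regularity of $t\mapsto\theta_t$ is exactly where such difficulties surface. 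Until you can actually prove your two differential inequalities (or replace them by something playing the role of \eqref{Id3}), the proposal is a plausible program rather than a proof.
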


We underline that for $\|V\|<d$ the bound \eqref{edif1} was
established  in \cite{MotSel}. It was called there the \textit{A
priori} $\tan\Theta$ \textit{Theorem}. For $\|V\|=d$ this bound may
be obtained from the result of \cite{MotSel} by continuity. Having
proved Theorem~\ref{Th1} we confirm the truth of the conjecture of
\cite[Remark~5.7]{MotSel} and we thus close the gap in the
subspace perturbation problem for dispositions~\eqref{tant} which
has remained for $\|V\|/d\in (1,\sqrt{2})$. We also remark
that the {a priori} $\tan\theta$ theorem for eigenvectors
\cite[Theorem~1.1]{AlMoSel} is a simple corollary of
Theorem~\ref{Th1}.

Our proof of Theorem~\ref{Th1} is essentially based on the reduction
of the subspace perturbation problem under consideration to the
study of the operator Riccati equation
\begin{equation}
\label{Ric0}
XA_0-A_1X+XBX=B^*
\end{equation}
with $A_0=A\bigr|_{\fA_0}$, $A_1=A\bigr|_{\fA_1}$, and
$B=V\bigr|_{\fA_1}$ where $\fA_0=\Ran\bigl(\sE_A(\sigma_0)\bigr)$
and $\fA_1=\Ran\bigl(\sE_A(\sigma_1)\bigr)$. In fact, the perturbed
spectral subspace $\fL_0=\Ran\bigl(\sE_L(\omega_0)\bigr)$ is the
graph of a particular solution $X\in\cB(\fA_0,\fA_1)$ to
equation~\eqref{Ric0}. In such a case (see, e.g., \cite{KMM2})
\begin{equation}
\label{EEX}
\|\sE_{A}(\sigma_0)-\sE_{L}(\omega_0)\|=\sin
\left( \arctan\|X\|\right).
\end{equation}
Thus, having established a bound for the solution $X$ one
simultaneously obtains an estimate for the norm of the difference of
the spectral projections $\sE_{A}(\sigma_0)$ and $\sE_{L}(\omega_0)$
as well as a bound for the operator angle
\begin{equation}
\label{Thet0}
\Theta=\arctan\sqrt{X^*X}
\end{equation}
between the spectral subspaces $\fA_0$ and $\fL_0$. For the concept
of operator angle and related material we refer to \cite{KMM2} and
references therein. Note that because of \eqref{Thet0} the
operator $X$ itself is usually called the angular operator for the
pair of subspaces $(\fA_0,\fL_0)$.

By \eqref{EEX} and \eqref{Thet0}, the bound \eqref{edif1} can be
equivalently written in the form
$$
\tan\Theta\leq\frac{\|V\|}{d}
$$
which implies that under conditions \eqref{tant} and \eqref{V2d} the
norm of the operator angle between $\fA_0$ and $\fL_0$ can never
exceed the value of $\arctan\sqrt{2}\, \left(\approx 54^\circ
44'\right)$.

The present article is the third in a series of papers on a priori
$\tan\Theta$ bounds, following \cite{AlMoSel,MotSel}. Its strategy,
however, is very different from the approaches used in
\cite{AlMoSel} and \cite{MotSel}. The approach of paper
\cite{MotSel} (which was actually the first in the series) is based
on the properties of sectorial operators and on an involution
technique that works only in cases where $\Theta<\pi/4$ (also
cf.\,\cite{GKMV2010}) and the corresponding angular operators $X$ in
\eqref{EEX} are contractions. The approach of \cite{AlMoSel} only applies to
individual eigenvectors of $L$ and there is no chance to extend it
to multi-dimensional spectral subspaces. The key ingredient of
the method we use in this paper is a new identity for eigenvalues and
eigenvectors of the modulus $|X|=\sqrt{X^*X}$ of $X$ that was
found only after the articles \cite{AlMoSel} and \cite{MotSel} were
written. Here we mean the identity \eqref{Id3} of Lem\-ma~\ref{Lcr}
below which allows us to obtain a norm bound for $X$ even if $X$ is
not a contraction (see Theorem \ref{ThMainM3} and its proof).

The paper is organized as follows. In Section \ref{SecOR} we prove
the key Lem\-ma~\ref{Lcr}. Then we recall some known bounds on the
shift of the spectrum of the operator $A$ under a perturbation $V$
satisfying the more detailed (and weaker than \eqref{V2d}) condition
$\|V\|<\sqrt{d|\Delta|}$ where $|\Delta|$ stands for the length of
the gap $\Delta$. We also recall a known norm bound for the angular
operator $X$ in  \eqref{EEX} that is valid for
$\|V\|<\sqrt{d(|\Delta|-d)}$. In Section~\ref{Sec3} we employ the
identity~\eqref{Id3} to obtain an estimate for $\|X\|$ already for
$\|V\|\geq\sqrt{d(|\Delta|-d)}$ but in the special case where  $|X|$
is assumed to have an eigenvalue equal to $\|X\|$. In Section
\ref{Sec4}, this estimate for $\|X\|$ is used to prove our most
general and detailed subspace variation bound (see Theorem
\ref{ThTot}). We conclude with a proof of Theorem~\ref{Th1}.

The following notations are used thro\-ug\-h\-o\-ut the paper. By a
subspace we always un\-der\-stand a closed linear subset of a
Hilbert space. The identity operator on a subspace (or on the whole
Hilbert space) $\fM$ is denoted by $I_\fM$. If no confusion arises,
the index $\fM$ may be omitted in this notation.  The Banach space
of bounded linear operators from a Hilbert space $\fM$ to a Hilbert
space $\fN$ is denoted by $\cB(\fM,\fN)$. By $\fM\oplus\fN$ we
understand the orthogonal sum of two Hilbert spaces (or orthogonal
subspaces) $\fM$ and $\fN$. The graph
$\cG(K)=\{y\in\fM\oplus\fN\,|\,\,y=x\oplus Kx,\,\, x\in\fM\}$ of a
bounded operator $K\in\cB(\fM,\fN)$ is called the {graph subspace}
(associated with the operator $K$). By $\sE_T(\sigma)$ we always
denote the spectral projection of a self-adjoint operator $T$
associated with a Borel set $\sigma\subset\bbR$. The notation
$\vrho(T)$ is used for the resolvent set of $T$. The domain and the range of
an operator $S$ are denoted by $\Dom(S)$ and $\Ran(S)$,
respectively.

\section{Preliminaries}
\label{SecOR}

It is convenient to represent the operators under
consideration as block operator matrices. Since condition
\eqref{tant} will not always be assumed, we
first adopt a hypothesis that implies no constraints on the mutual
position of the spectra of the entries $A_0$ and $A_1$.

\begin{hypothesis}
\label{HypL} Let $\fA_0$ and $\fA_1$ be complementary orthogonal
subspaces of a separable Hilbert space $\fH$. Assume that $A$ is a
self-adjoint operator on $\fH=\fA_0\oplus\fA_1$ admitting the
block diagonal representation
\begin{align}
\label{Adiag}
A & = \left(\begin{array}{cc} A_0 & 0\\
0 & A_1
\end{array}\right), \quad \Dom(A)=\fA_0\oplus\Dom(A_1),
\end{align}
with $A_0$ a bounded self-adjoint operator on $\fA_0$ and
$A_1$ a possibly unbounded self-adjoint operator on $\fA_1$. Suppose
that $V$ is an off-diagonal bounded self-adjoint operator on $\fH$, i.e.,
\begin{align}
\label{Voff}
V & =\left(\begin{array}{cc} 0 & B\\
B^* & 0
\end{array}\right),
\end{align}
where $0\neq B\in\cB(\fA_1,\fA_0)$, and let $L=A+V$,
$\Dom(L)=\Dom(A)$, that is,
\begin{equation}
\label{Ltot}
L=\begin{pmatrix} A_0 & B \\ B^* & A_1 \end{pmatrix},
\quad \Dom(L)=\fA_0\oplus\Dom(A_1).
\end{equation}
\end{hypothesis}

Under the assumptions of Hypothesis \ref{HypL}, an operator
$X\in\cB(\fA_0,\fA_1)$ is said to be a solution of the operator
Riccati equation \eqref{Ric0} if
\begin{equation}
\label{DefSolRic}
\Ran(X)\subset\Dom(A_1)
\end{equation}
and \eqref{Ric0} holds as an operator equality on $\fA_0$ (cf., e.g.,
\cite[Definition 3.1]{AMM}). Clearly, the solution $X$, whenever it
exists, satisfies $X\neq 0$; otherwise, $X=0$ implies $B=0$ which
contradicts the hypothesis. In the following by $U$ we denote the
partial isometry in the polar decomposition $X=U|X|$ of $X$. We
adopt the convention that $U$ is extended to $\Ker(X)=\Ker(|X|)$ by
\begin{equation}
\label{conviso} U|_{\Ker(X)}=0.
\end{equation}
In such a case $U$ is uniquely defined on the whole space $\fA_0$
(see, e.g., \cite[Theorem 8.1.2]{BirSol}) and
\begin{equation}
\label{Kisom} U \text{ \ is an isometry on \ } \Ran(|X|)=\Ran(X^*).
\end{equation}
The assertion below provides us with three useful identities for eigenvalues
and eigenvectors (in case they exist) of the modulus $|X|$.

\begin{lemma}
\label{Lcr} Assume Hypothesis \ref{HypL}. Let $X\in\cB(\fA_0,\fA_1)$
be a solution to the operator Riccati equation \eqref{Ric0}. Suppose
that $|X|$ has an eigenvalue $\lambda$ ($\lambda\geq 0$) and that
$u$, $u\neq 0$, is an eigenvector of $|X|$ corresponding to this
eigenvalue, i.e. $|X|u=\lambda u$. If $U$ is the isometry from the
polar representation $X=U|X|$ of the operator $X$, then
$Uu\in\Dom(A_1)$ and the following three identities hold:
\begin{align}
\nonumber
\lambda\bigl(\|A_0u\|^2+\|B^*u\|^2-\|A_1Uu\|^2-&\|BUu\|^2\bigr)\\
\label{Id2}
=(1-\lambda^2)&\bigl(\lal A_0u,BUu\ral+\lal B^*u,A_1Uu\ral\bigr),\quad\\
\label{Id1}
\lambda\bigl(\lal A_0u,BUu\ral+\lal B^*u,A_1Uu\ral\bigr)&=\|\Lambda_0 u\|^2-\|A_0 u\|^2-\|B^*u\|^2,\\
\label{Id3}
\lambda^2\bigl(\|A_1Uu\|^2+\|BUu\|^2-\|\Lambda_0 u\|^2\bigr)
&=\|A_0u\|^2+\|B^*u\|^2-\|\Lambda_0 u\|^2,
\end{align}
where the entry
\begin{equation}\label{Lam0}
\Lambda_0 =(I+|X|^2)^{1/2}(A_0+BX) (I+|X|^2)^{-1/2}
\end{equation}
is bounded and self-adjoint on $\fA_0$.
\end{lemma}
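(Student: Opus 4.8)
The plan is to extract the three identities from two genuinely independent relations: a \emph{norm relation} coming from the Riccati equation \eqref{Ric0} evaluated on $u$, and an \emph{orthogonality relation} coming from its adjoint evaluated on $Uu$. Throughout put $Q=I+|X|^2$ and abbreviate $D:=\|A_0u\|^2+\|B^*u\|^2$, $C:=\|A_1Uu\|^2+\|BUu\|^2$, $P:=\lal A_0u,BUu\ral+\lal B^*u,A_1Uu\ral$, $N:=\|\Lambda_0u\|^2$, so that \eqref{Id2}, \eqref{Id1}, \eqref{Id3} read $\lambda(D-C)=(1-\lambda^2)P$, $\lambda P=N-D$, and $\lambda^2(C-N)=D-N$. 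First I would record the polar-decomposition consequences of $|X|u=\lambda u$: for $\lambda>0$ one has $u\in\Ran(|X|)$, hence $Xu=\lambda Uu$, $X^*Uu=\lambda u$ and $\|Uu\|=\|u\|$ by \eqref{Kisom}; moreover $Uu=\lambda^{-1}Xu\in\Ran(X)\subset\Dom(A_1)$, which settles $Uu\in\Dom(A_1)$ (the case $\lambda=0$ gives $Uu=0$ and all three identities collapse to consequences of $XA_0u=B^*u$, itself immediate from \eqref{Ric0}). Boundedness of $\Lambda_0$ follows since $A_1X=XA_0+XBX-B^*$ is bounded by \eqref{Ric0}, so $A_0+BX\in\cB(\fA_0)$ while $Q^{\pm1/2}\in\cB(\fA_0)$; self-adjointness is the identity $Q(A_0+BX)=(A_0+BX)^*Q$, a direct consequence of \eqref{Ric0} and its adjoint.

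The structural device is the decomposition $\fH=\cG(X)\oplus\cG(X)^\perp$ into two subspaces that both reduce $L$: the graph $\cG(X)$ is $L$-invariant because $X$ solves \eqref{Ric0}, and $\cG(X)^\perp=\cG(-X^*)$ is $L$-invariant because $-X^*$ solves the adjoint equation $A_0X^*-X^*A_1+X^*B^*X^*=B$. I set $\varphi=u\oplus Xu=u\oplus\lambda Uu\in\cG(X)$ and $\psi=-X^*Uu\oplus Uu=-\lambda u\oplus Uu\in\cG(X)^\perp$, both lying in $\Dom(L)$.

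For the norm relation, from $Q^{-1/2}u=(1+\lambda^2)^{-1/2}u$ I get $\Lambda_0u=(1+\lambda^2)^{-1/2}Q^{1/2}\xi$ with $\xi:=(A_0+BX)u=A_0u+\lambda BUu$, so that
\[
N=(1+\lambda^2)^{-1}\lal\xi,Q\xi\ral=(1+\lambda^2)^{-1}\bigl(\|\xi\|^2+\|X\xi\|^2\bigr).
\]
Rearranging \eqref{Ric0} applied to $u$ yields $X\xi=B^*u+\lambda A_1Uu$; expanding the two squared norms then gives $(1+\lambda^2)N=D+\lambda^2C+2\lambda\,\Real P$. For the second relation I use that $\varphi$ and $\psi$ sit in orthogonal reducing subspaces, so $L\varphi\perp L\psi$. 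Computing $L\varphi=\xi\oplus X\xi$ and $L\psi=(-\lambda A_0u+BUu)\oplus(-\lambda B^*u+A_1Uu)$ and expanding $\lal L\varphi,L\psi\ral=0$ produces $\lambda(C-D)+P-\lambda^2\overline P=0$; its imaginary part forces $P\in\bbR$, and its real part is exactly \eqref{Id2}.

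It then remains to combine the two relations by elementary algebra: using $\Real P=P$, eliminating $C$ between \eqref{Id2} and the norm relation gives $N=D+\lambda P$, which is \eqref{Id1}, and \eqref{Id3} follows by combining \eqref{Id1} with \eqref{Id2}. I expect the main obstacle to be bookkeeping around the unbounded entry $A_1$: one must justify via \eqref{Ric0} that $A_1X$, and hence $A_0+BX$ and $\Lambda_0$, is bounded, verify that $-X^*$ really solves the adjoint Riccati equation so that $\cG(X)^\perp$ reduces $L$, and check the domain memberships $\Ran(X)\subset\Dom(A_1)$ and $Uu\in\Dom(A_1)$ that make $L\varphi$, $L\psi$ and $X\xi=B^*u+\lambda A_1Uu$ legitimate. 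The conceptual crux is the recognition that the second, independent identity is precisely the orthogonality $L\varphi\perp L\psi$ of the two halves of the reducing decomposition—equivalently, the adjoint of \eqref{Ric0} evaluated on $Uu$—so that the whole lemma rests on using \eqref{Ric0} on $u$ together with its adjoint on $Uu$.
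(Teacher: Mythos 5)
The proposal is correct and follows essentially the same route as the paper: the central step---deriving \eqref{Id2} from the orthogonality $\lal Lx,Ly\ral=0$ of the images under $L$ of $x=u\oplus\lambda Uu\in\cG(X)$ and $y=-\lambda u\oplus Uu\in\cG(-X^*)$, the two reducing graph subspaces---is identical, as is the handling of $\lambda=0$ and of $Uu\in\Dom(A_1)$. The only (harmless) deviation is in \eqref{Id1}: you get it by computing $(1+\lambda^2)\|\Lambda_0u\|^2=\|(A_0+BX)u\|^2+\|X(A_0+BX)u\|^2$ and then eliminating terms via \eqref{Id2}, whereas the paper derives \eqref{Id1} independently of \eqref{Id2} from the operator identity $A_0BX+BA_1X=(A_0+BX)^2-A_0^2-BB^*$ together with the self-adjointness of $\Lambda_0$; both rest on the same use of \eqref{Ric0}, and your version has the small bonus of making explicit that the cross term $\lal A_0u,BUu\ral+\lal B^*u,A_1Uu\ral$ is real.
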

\begin{proof}
We start with remark that if $\lambda\neq 0$ then
$Uu=\frac{1}{\lambda}U|X|u=\frac{1}{\lambda}Xu$ and, hence,
$Uu\in\Dom(A_1)$ by \eqref{DefSolRic}. For $\lambda=0$ we
have $u\in\Ker(|X|)=\Ker(X)$ and then $Uu=0\in\Dom(A_1)$ by
convention \eqref{conviso}. We also notice that for the eigenvector
$u$ of $|X|$ associated with the nonzero eigenvalue $\lambda>0$ one
automatically has $u\in\Ran(|X|)$ and, thus, in this case
the assertion \eqref{Kisom} implies $U^*Uu=u$.

First we prove the identity \eqref{Id2}. If $\lambda=0$, then
\eqref{Id2} is trivial since $Uu=0$ due to \eqref{conviso}. Suppose
that $\lambda>0$ and set
\begin{equation}
\label{xG}
x:=\left(\begin{array}{c}
u\\
Xu
\end{array}\right)=
\left(\begin{array}{c}
u\\
U|X|u
\end{array}\right)=\left(\begin{array}{c}
u\\
\lambda Uu
\end{array}\right),
\end{equation}
\begin{equation}
\label{yG}
y:=\left(\begin{array}{c}
-X^*Uu\\
Uu
\end{array}\right)=
\left(\begin{array}{c}
-|X|U^*Uu\\
Uu
\end{array}\right)=
\left(\begin{array}{r}
-|X|u\\
Uu\,\,
\end{array}\right)=
\left(\begin{array}{r}
-\lambda u\\
Uu
\end{array}\right).
\end{equation}
From $Uu\in\Dom(A_1)$ one concludes that both $x$ and $y$ belong to
$\Dom(L)$. Since $X$ is a solution to the operator Riccati equation
\eqref{Ric0}, by, e.g., \cite[Lemma 5.3 and Theorem 5.5]{AMM} the
graphs $\cG(X)$ and $\cG(-X^*)$ are reducing subspaces for the
operator matrix $L$. Clearly, $x\in\cG(X)$ and $y\in\cG(-X^*)$ which
yields $Lx\in\cG(X)$ and $Ly\in\cG(-X^*)$. Since the subspaces
$\cG(X)$ and $\cG(-X^*)$ are orthogonal to each other, we have
\begin{equation}
\label{LxLy}
\lal Lx,Ly\ral=0.
\end{equation}
Using the last equalities in \eqref{xG} and \eqref{yG}
one obtains
\begin{equation}
\label{LxLy1}
Lx=\left(\begin{array}{c}
A_0u+\lambda BUu\\
B^*u+\lambda A_1Uu
\end{array}\right)
\quad\text{and}\quad
Ly=\left(\begin{array}{c}
-\lambda A_0u + BUu\\
-\lambda B^*u + A_1Uu
\end{array}\right).
\end{equation}
Substitution of the expressions for $Lx$ and $Ly$ from \eqref{LxLy1}
into the equality \eqref{LxLy} results in the identity
\eqref{Id2}.

To prove \eqref{Id1}, we begin with the following equalities:
\begin{align}
\label{Step1}
A_0BX+BA_1X &=A_0BX+B(XA_0+XBX-B^*)\\
\label{eM2}
&=(A_0+BX)^2-A_0^2-BB^*,
\end{align}
by taking into account at the step \eqref{Step1} that, due to
\eqref{Ric0}, \,$A_1X=XA_0+XBX-B^*$. Since $Xu=U|X|u=\lambda Uu$ and $Uu\in\Dom(A_1)$,
equality \eqref{eM2} yields
\begin{align}
\label{eM3} \lambda\bigl(\lal A_0u,BUu\ral+\lal B^*u,A_1Uu\ral\bigr)
&=\lal u,(A_0+BX)^2u\ral-\|A_0u\|^2-\|B^*u\|^2.
\end{align}
Clearly,
\begin{equation}
\label{Z2T2}
(A_0+BX)^2=(I+|X|^2)^{-1/2}\Lambda_0^2(I+|X|^2)^{1/2},
\end{equation}
where $\Lambda_0$ is the bounded operator given by \eqref{Lam0}.
Since $u$ is an eigenvector of $|X|$, by \eqref{Z2T2} one obtains
\begin{align}
\nonumber
\lal u,(A_0+BX)^2u\ral&=\lal(I+|X|^2)^{-1/2}u,\Lambda_0^2(I+|X|^2)^{1/2}u\ral\\
\nonumber
&=\lal(1+\lambda^2)^{-1/2}u,\Lambda_0^2(1+\lambda^2)^{1/2}u\ral\\
\label{eM4}
&=\lal u,\Lambda_0^2u\ral.
\end{align}
That the operator $\Lambda_0$ is self-adjoint follows, e.g., from
\cite[Theorem 5.5]{AMM}. Hence, combining \eqref{eM3} and
\eqref{eM4} one arrives at \eqref{Id1}.

As for the identity \eqref{Id3}, for $\lambda=0$ it follows
immediately from \eqref{Id1}. If $\lambda>0$, then \eqref{Id3} is
obtained by combining \eqref{Id1} with \eqref{Id2}.
\end{proof}

From now on we assume the spectral disposition \eqref{tant}.
When necessary, this disposition will be described in more detail as follows.

\begin{hypothesis}
\label{HypD} Assume Hypothesis \ref{HypL}. Let
$\sigma_0=\spec(A_0)$ and $\sigma_1=\spec(A_1)$.
Suppose that an open interval $\Delta=(\gamma_{\rl},\gamma_{\rr})\subset\bbR$,\,
$\gamma_\rl<\gamma_\rr$,\, is a finite gap of the set $\sigma_1$ and
$\sigma_0\subset\Delta$. Set
$d=\dist\bigl(\sigma_0,\sigma_1\bigr)$.
\end{hypothesis}

Below we will use the following assertions obtained by using
several results proven in~\cite{KMM3}.

\begin{theorem}  \label{T-KMMa}
Assume Hypothesis \ref{HypD} and suppose that
$\|V\|<\sqrt{d|\Delta|}$.
Then:
\begin{enumerate}
\item[{\rm(i)}] The
spectrum of the block operator matrix $L$ consists of two
disjoint components $\omega_0\subset\Delta$ and
$\omega_1\subset\bbR\setminus\Delta$. In particular,
\begin{equation}
\label{incl}
\min(\omega_0)\geq \gamma_\rl+(d-r_V)\text{\, and \,} \max(\omega_0)\leq\gamma_\rr- (d-r_V),
\end{equation}
where
\begin{equation}
\label{rV}
r_V:=\|V\|\tan\left(\frac{1}{2}\arctan\frac{2\|V\|}{|\Delta|-d}\right)<d.
\end{equation}

\item[{\rm(ii)}] There is a unique solution $X\in\cB(\fA_0,\fA_1)$ to
the Riccati equation \eqref{Ric0} with the properties
\begin{equation}
\label{sigL}
\qquad\qquad\qquad\spec(A_0+BX)=\omega_0\,\text{ and }\,
\spec(A_1-B^*X^*)=\omega_1;
\end{equation}
the spectral subspaces $\fL_0=\Ran\bigl(\sE_L(\omega_0)\bigr)$ and
$\fL_1=\Ran\bigl(\sE_L(\omega_1)\bigr)$ are graph subspaces of the
form $\fL_0=\cG(X)$ and $\fL_1=\cG(-X^*)$.
\end{enumerate}
\end{theorem}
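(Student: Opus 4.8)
The plan is to derive both parts of the theorem by specializing the spectral-enclosure and Riccati-solvability results of \cite{KMM3} to the geometry of Hypothesis~\ref{HypD}. First I would record the elementary geometric fact that, since the endpoints $\gamma_\rl,\gamma_\rr$ of the gap $\Delta$ belong to $\sigma_1$ and $\dist(\sigma_0,\sigma_1)=d$, the set $\sigma_0$ is trapped in the concentric interval $[\gamma_\rl+d,\gamma_\rr-d]$; in particular $|\Delta|\geq 2d$. The second preliminary is purely algebraic: applying the half-angle identity $\tan(\tfrac12\arctan t)=t/(1+\sqrt{1+t^2})$ to \eqref{rV} rewrites $r_V$ as
$$
r_V=\frac{2\|V\|^2}{(|\Delta|-d)+\sqrt{(|\Delta|-d)^2+4\|V\|^2}},
$$
and squaring the inequality $r_V<d$ then shows that the hypothesis $\|V\|<\sqrt{d|\Delta|}$ is \emph{exactly equivalent} to $r_V<d$. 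This identifies $\sqrt{d|\Delta|}$ as the natural threshold and secures the strict inequality claimed on the right of \eqref{rV}.

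For part (i), I would invoke the \cite{KMM3} estimate on the displacement of the spectral component of an off-diagonal perturbation lying inside a gap: the part of $\spec(L)$ issuing from $\sigma_0$ cannot move closer to either endpoint of $\Delta$ than by $r_V$. Combined with the enclosure $\sigma_0\subset[\gamma_\rl+d,\gamma_\rr-d]$ this gives $\min(\omega_0)\geq\gamma_\rl+(d-r_V)$ and $\max(\omega_0)\leq\gamma_\rr-(d-r_V)$, i.e.\ \eqref{incl}. Since $r_V<d$, the enclosing interval is strictly interior to $\Delta$, so the portion $\omega_0=\spec(L)\cap\Delta$ is bounded away from the endpoints and hence separated from $\omega_1=\spec(L)\setminus\Delta$; this yields the asserted splitting into two disjoint components, with $\omega_0$ nonempty (its nonemptiness follows from the graph representation of part (ii) together with $\fA_0\neq\{0\}$).

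For part (ii), the separation of $\omega_0$ and $\omega_1$ produced in (i) is exactly the input needed for the graph-subspace/Riccati machinery of \cite{KMM3}. Because $\omega_0$ and $\omega_1$ are disjoint and separated by intervals inside $\Delta$, the spectral projections $\sE_L(\omega_0)$ and $\sE_L(\omega_1)$ decompose $L$ into reducing parts whose subspaces are graphs over $\fA_0$ and $\fA_1$; this furnishes a unique $X\in\cB(\fA_0,\fA_1)$ with $\fL_0=\cG(X)$ and $\fL_1=\cG(-X^*)$, and $X$ solves \eqref{Ric0}. The spectral identities \eqref{sigL} then follow by transporting the reducing parts of $L$ through the graphs: a direct check using \eqref{Ric0} shows that on $\cG(X)$ the operator $L$ is intertwined with $A_0+BX$ and on $\cG(-X^*)$ with $A_1-B^*X^*$, whence $\spec(A_0+BX)=\omega_0$ and $\spec(A_1-B^*X^*)=\omega_1$. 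Uniqueness comes from the \cite{KMM3} uniqueness criterion, which applies precisely under the separation established above.

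The main obstacle is not analytic but bookkeeping: I must verify that the general separation hypotheses under which the \cite{KMM3} enclosure, solvability, and uniqueness theorems are stated are met verbatim here, and that the optimal shift their estimate produces for this configuration is indeed $r_V$ and nothing larger. Concretely, the one genuinely checkable step is the squaring argument giving $\|V\|<\sqrt{d|\Delta|}\Leftrightarrow r_V<d$, since it is this equivalence that guarantees $\omega_0$ never reaches the endpoints of $\Delta$ and thereby unlocks the graph representation; everything else is a faithful specialization of \cite{KMM3}.
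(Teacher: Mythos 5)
Your proposal is correct and follows essentially the same route as the paper, which likewise establishes part (i) by citing the spectral enclosure result \cite[Theorem~3.2]{KMM3} and part (ii) by combining \cite[Theorem~2.3]{KMM3} with the existence--uniqueness result \cite[Theorem~1\,(i)]{KMM3}. Your added algebraic check that $\|V\|<\sqrt{d|\Delta|}$ is equivalent to $r_V<d$ is a correct and worthwhile supplement that the paper leaves implicit.
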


\begin{remark}
Assertion (i) of Theorem \ref{T-KMMa} follows from
\cite[Theorem~3.2]{KMM3}. Assertion (ii) is obtained by
combining \cite[Theorem 2.3]{KMM3} with an existence
and uniqueness result for the operator Riccati equation \eqref{Ric0}
established in \cite[Theorem~1\,(i)]{KMM3}.
\end{remark}

A sharp {a priori} norm estimate for the operator angle between the
subspaces $\Ran\bigl(\sE_A(\sigma_0)\bigr)$ and
$\Ran\bigl(\sE_L(\omega_0)\bigr)$ and, equivalently, for the
corresponding angular operator $X$ in \eqref{EEX} was obtained
in \cite[Theorem 5.3]{MotSel} under an assumption that is stronger
than condition $\|V\|<\sqrt{d|\Delta|}$ of Theorem~\ref{T-KMMa}. We
formulate the main statement of \cite[Theorem 5.3]{MotSel} in the
following form.

\begin{theorem}[\cite{MotSel}]
\label{T-MS} Assume Hypothesis \ref{HypD}. Assume in addition that
$$\| V \| < \sqrt{d(|\Delta| - d)}.$$ Let $X$ be the unique solution
to the Riccati equation \eqref{Ric0} with the properties
\eqref{sigL}. Then
\begin{equation}
\label{thebest} \|X\| \leq\tan\left(\frac{1}{2}\arctan
\varkappa\bigl(|\Delta|,d,\|V\|\bigr)\right) \quad\Bigl(< 1\Bigr),
\end{equation}
where $\varkappa(D,d,\slv)$ is defined for
\begin{equation}
\label{Om12}
D>0,\quad 0< d\leq \dfrac{D}{2},\quad\text{and}\quad 0\leq \slv<\sqrt{d(D-d)}
\end{equation}
by
\begin{equation*}
\varkappa(D,d,\slv):=\left\{\begin{array}{cl}
\mbox{\small$\displaystyle\frac{2\slv}{d}$} & \text{if \,} \slv \leq
\mbox{\small$\displaystyle{\frac{1}{2}}$}\sqrt{d\left(D-2d\right)},\\[4mm]
\mbox{\small$\displaystyle\frac{{\slv}D+\sqrt{d(D-d)}\,
\sqrt{(D-2d)^2+4{\slv}^2}}{2\,\bigl(d(D-d)-{\slv}^2\bigr)}$} & \text{if \,} \slv
>\mbox{\small$\displaystyle{\frac{1}{2}}$}\sqrt{d\left(D-2d\right)}.
\end{array}\right.
\end{equation*}
\end{theorem}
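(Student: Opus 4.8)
The plan is to realize the angular operator $X$ as the unique solution of the Riccati equation \eqref{Ric0} whose graph is the perturbed subspace $\fL_0=\cG(X)$ (Theorem \ref{T-KMMa}\,(ii)), so that estimating $\|X\|$ is equivalent to estimating $\tan\|\Theta\|$. Under the hypothesis $\|V\|<\sqrt{d(|\Delta|-d)}$ one expects $\|X\|<1$, i.e. $\Theta<\pi/4$; this sectoriality is exactly what makes the trigonometric machinery applicable and is the reason \eqref{thebest} stays strictly below $1$. Since $V$ is off-diagonal it anticommutes with the involution $\sE_A(\sigma_0)-\sE_A(\sigma_1)$, which is what permits a rotation/involution representation of $\fL_0$ that is well defined precisely while $\Theta<\pi/4$. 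After an inessential shift I would normalize the gap to $\Delta=(-|\Delta|/2,|\Delta|/2)$ and record, via Theorem \ref{T-KMMa}\,(i), that the perturbed set $\omega_0$ remains inside $\Delta$ while $\sigma_1$ sits at and beyond the endpoints $\gamma_\rl,\gamma_\rr$.

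The core estimate I would invoke is the Davis--Kahan $\tan 2\Theta$ theorem \cite{DK70} in its off-diagonal form, valid precisely while $\Theta<\pi/4$: it bounds $\tan 2\Theta$ by $2\|V\|$ divided by the symmetric separation $s$ between the perturbed spectrum $\omega_0$ (lying inside $\Delta$) and the unperturbed spectrum $\sigma_1$ (lying outside $\Delta$). The difficulty specific to the finite-gap disposition \eqref{tant} is that this separation $s$ is not the fixed number $d$: as $\|V\|$ increases, $\omega_0$ spreads toward the endpoints of $\Delta$ and the clearance $s$ drops below $d$. Because the amount of this spreading is itself governed by $\|V\|$ and by the geometry of the gap through a spectral enclosure of the type \eqref{incl}, the resulting inequality for $\tan 2\Theta$ is self-consistent: the bound depends on the very location of $\omega_0$ that it is meant to control.

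To turn this self-consistent inequality into the explicit constant $\varkappa$ I would reduce the problem to its extremal spectral configuration. By a monotonicity argument in the spectral data one should be able to push $\sigma_0$ and $\sigma_1$ to the least favorable admissible positions -- a single point of $\sigma_0$ at a variable location in $\Delta$, the mass of $\sigma_1$ concentrated at the two endpoints $\gamma_\rl,\gamma_\rr$, and $B$ of rank one -- whereupon \eqref{Ric0} collapses to a scalar quadratic solvable in closed form. Optimizing the position of this point inside $\Delta$ then yields $\varkappa(|\Delta|,d,\|V\|)$; the two branches correspond to whether the optimizer is interior to the gap (the pure $\tan 2\Theta$ bound $2\|V\|/d$, with binding constraint $d$) or pushed into the boundary region where the spreading of $\omega_0$ becomes binding (the second, more elaborate expression, which blows up as $\|V\|\uparrow\sqrt{d(|\Delta|-d)}$). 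Sharpness of \eqref{thebest} would then be established by exhibiting explicit $2\times2$ block examples realizing this extremal configuration.

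The step I expect to be the main obstacle is closing the self-consistent loop and converting it into the exact piecewise formula for $\varkappa$. Concretely, one must justify rigorously that the worst case is attained at the scalar, rank-one extremal configuration -- a reduction via monotonicity of $\|X\|$ under enlargement of the spectra -- and then carry out the optimization cleanly enough to detect the transition between the two regimes at $\|V\|=\tfrac12\sqrt{d(|\Delta|-2d)}$. Controlling the spreading of $\omega_0$ sharply, rather than only through the crude enclosure \eqref{incl}, is the technical heart of the argument.
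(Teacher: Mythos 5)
First, note that the paper does not prove Theorem \ref{T-MS} at all: it is quoted from \cite{MotSel} (Theorem 5.3 there), so there is no in-paper proof to compare yours against. The introduction does, however, describe the method of \cite{MotSel}: it rests on properties of sectorial operators and on an involution technique that works precisely because, under $\|V\|<\sqrt{d(|\Delta|-d)}$, the angular operator $X$ is a contraction ($\Theta<\pi/4$). Your opening paragraph is consistent with that description, and your qualitative reading of the two branches of $\varkappa$ (transition at $\tfrac12\sqrt{d(|\Delta|-2d)}$, blow-up of $\varkappa$ and hence $\|X\|\to 1$ as $\|V\|\uparrow\sqrt{d(|\Delta|-d)}$) is correct. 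But the two steps you actually lean on do not hold up.

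Concretely: (a) the Davis--Kahan $\tan2\Theta$ theorem \cite{DK70} is not available for the disposition \eqref{tant}, where $\sigma_0$ sits inside a \emph{finite} gap of $\sigma_1$; producing a sharp a priori bound in exactly this geometry is the entire content of the theorem being proved, and the ``self-consistent inequality'' relating $\tan2\Theta$ to the (a priori unknown) clearance of $\omega_0$ from the gap endpoints is not something you can invoke --- it has to be constructed, which is what the sectorial/involution analysis of the Riccati equation \eqref{Ric0} in \cite{MotSel} (building on \cite{KMM3} and the enclosure \eqref{incl}) does. (b) Your reduction ``by monotonicity'' to an extremal configuration consisting of a single point of $\sigma_0$ and a rank-one $B$ is both unjustified and, as stated, contradicted by the paper's own sharpness examples: the branch of $\varkappa$ on $\Omega_1^{(1)}$, i.e.\ for $\tfrac12\sqrt{d(|\Delta|-2d)}<\|V\|<\sqrt{d(|\Delta|-d)}$, is attained in Example \ref{Ex-MS-5.5} by a $4\times4$ configuration with the two-point set $\sigma_0=\{-a,a\}$ and a full $2\times2$ block $B$, whereas the rank-one, single-eigenvalue configuration of Example \ref{Ex-AlMoSel} saturates only the first branch $2\slv/d$. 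No monotonicity of $\|X\|$ under enlargement of the spectral data is established (or true in the form you need) that would collapse the general, possibly infinite-dimensional, case to a scalar quadratic. So the technical heart of the proof --- deriving the exact piecewise formula for $\varkappa$ --- is missing from your proposal, not merely deferred.
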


In the sequel, the estimating function appearing on the right-hand
side of \eqref{thebest} will be denoted by $M_1$, that is,
\begin{equation}
\label{M1ef}
M_1(D,d,\slv):=\tan\left(\frac{1}{2}\arctan
\varkappa\bigl(D,d,\slv\bigr)\right),\quad (D,d,\slv)\in\Omega_1,
\end{equation}
where $\Omega_1$ stands for the set of points $(D,d,\slv)\in\bbR^3$
with coordinates $D,$ $d$, $\slv$ satisfying \eqref{Om12}.

\begin{remark}
\label{RemM1}
Using the elementary formula
$$
\tan\left(\frac{1}{2}\arctan x\right)
=\frac{x}{1+\sqrt{1+x^2}},\,\,\, x\in\bbR,
$$
one can also write the function $M_1(D,d,\slv)$ in the algebraic form
\begin{align}
\label{M1x}
M_1(D,d,{\slv})\biggl|_{\Omega_1^{(0)}}=&\text{\small$\frac{2{\slv}}{d+\sqrt{d^2+4{\slv}^2}}$},\\
\label{M11}
M_1(D,d,{\slv})\biggl|_{\Omega_1^{(1)}}=&\text{\small$\frac{{\slv}\bigl(2{\slv}+\sqrt{(D-2d)^2+4{\slv}^2}\,\bigr)
+\sqrt{d(D-d)}\,\bigl(D-2\sqrt{d(D-d)}\,\bigr)}
{D{\slv}+\sqrt{d(D-d)}\,\sqrt{(D-2d)^2+4{\slv}^2}}$},
\end{align}
where $\Omega_1^{(0)}$ and $\Omega_1^{(1)}$ denote the corresponding complementary
parts of the set $\Omega_1$,
\begin{align*}
\Omega_1^{(0)}:=&\left\{(D,d,\slv)\in\Omega_1\,\,\biggl|\,\,\, 0\leq\slv \leq
\text{\scriptsize$\frac{1}{2}$}\sqrt{d\left(D-2d\right)}\right\},\\
\Omega_1^{(1)}:=&\left\{(D,d,\slv)\in\Omega_1\,\,\biggl|\,\,\,
\text{\scriptsize$\frac{1}{2}$}\sqrt{d\left(D-2d\right)}<\slv<\sqrt{d(D-d)}\right\}.
\end{align*}
By \eqref{M1ef} we have
\begin{equation*}
0\leq M_1(D,d,\slv)< 1 \quad \text{for any \,}(D,d,\slv)\in\Omega_1.
\end{equation*}
By representation \eqref{M11} the function $M_1(D,d,\slv)$
admits a continuous extension to the part
\begin{equation}
\label{dOmega12}
\partial\Omega_{12}:=\left\{(D,d,\slv)\in\bbR^3\,\,\bigl|\,\,\,\, D>0,\,\, 0< d\leq {D}/{2},\,\,
 \slv=\sqrt{d(D-d)} \right\}
\end{equation}
of the boundary of $\Omega_1$ where $\slv=\sqrt{d(D-d)}$. For the extended function
we keep the same notation $M_1$. One
verifies by inspection that
$M_1(D,d,\slv)=1$ for any $(D,d,\slv)\in\partial\Omega_{12}$.

Obviously, the function $M_1(D,d,\slv)$ is infinitely differentiable
within the sets $\Omega_1^{(0)}$ and $\Omega_1^{(1)}$. Furthermore,
this function and the partial derivatives $\frac{\partial
M_1(D,d,\slv)}{\partial D}$, $\frac{\partial
M_1(D,d,\slv)}{\partial d}$, and $\frac{\partial
M_1(D,d,\slv)}{\partial \slv}$ vary continuously when $(D,d,\slv)$
passes through the common border
$\partial\Omega_1^{(01)}=\Omega_1^{(0)}\cap\overline{\,\,\Omega_1^{(1)}}$
of the subsets $\Omega_1^{(0)}$ and $\Omega_1^{(1)}$. Thus, the
function $M_1$ and its derivatives $\frac{\partial M_1}{\partial
D}$, $\frac{\partial M_1}{\partial d}$, and $\frac{\partial
M_1}{\partial \slv}$ are continuous on the whole set $\Omega_1$.
\end{remark}

\section{Norm bound for the angular operator in a special case}
\label{Sec3}

Technically, this section is central in the paper. We aim at
obtaining a norm bo\-und for the angular operator $X$  under
condition $\sqrt{d(|\Delta|-d)}\leq\|V\|<\sqrt{d|\Delta|}$ which is
outside of the scope of Theorem~\ref{T-MS}. In the proof we restrict
ourselves, however, to the special case where the mo\-du\-lus $|X|$
of $X$ has an eigenvalue coinciding with its norm
$\bigl\||X|\bigr\|=\|X\|$.

In order to formulate the result we introduce another estimating function
\begin{equation}
\label{M2ef}
M_2(D,d,{\slv}):=\sqrt{1+\mbox{\small$\dfrac{2{\slv}^2}{D^2}$}-
\mbox{\small$\dfrac{2}{D^2}$}\sqrt{dD-{\slv}^2}\,\sqrt{(D-d)D-{\slv}^2}},\,\,\,
(D,d,\slv)\in\Omega_2,
\end{equation}
where the set $\Omega_2$ is defined by
\begin{equation*}
\Omega_2:=\left\{(D,d,{\slv})\in\bbR^3\, \bigl|\,\, D>0,\,\, 0< d\leq D/2,\,\,
\sqrt{d(D-d)}\leq {\slv}< \sqrt{dD}\,\, \right\}.
\end{equation*}

\begin{remark}
\label{RemM2} Obviously, the function $M_2(D,d,\slv)$ is infinitely
differentiable inside $\Omega_2$ and continuous on $\Omega_2$.
One verifies by inspection that
\begin{align}
\label{M2bound}
& \min\limits_{(D,d,\slv)\in\Omega_2} M_2(D,d,\slv)=1,\,\,
\sup\limits_{(D,d,\slv)\in\Omega_2}M_2(D,d,\slv)=\sqrt{2},
\end{align}
and $M_2(D,d,\slv)=1$ for any
$(D,d,\slv)\in\partial\Omega_{12}$ where $\partial\Omega_{12}$ is
the intersection \eqref{dOmega12} of the boundaries of $\Omega_1$
and $\Omega_2$.
\end{remark}

\begin{theorem}
\label{ThMainM3}
Assume Hypothesis \ref{HypD}. Assume in addition that
\begin{equation}
\label{V2dd}
\sqrt{d(|\Delta|-d)}\leq \|V\|<\sqrt{d|\Delta|}.
\end{equation}
Let $X\in\cB(\fA_0,\fA_1)$ be the unique solution to the Riccati
equation \eqref{Ric0} with the properties \eqref{sigL}.
If $|X|$ has an eigenvalue $\mu$ such that $\mu=\|X\|$, then
the following bound holds:
\begin{equation}
\label{XMb}
\|X\|\leq M_2(|\Delta|,d,\|V\|),
\end{equation}
where the function $M_2(D,d,\slv)$ is given by \eqref{M2ef}.
\end{theorem}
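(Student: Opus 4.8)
The plan is to evaluate the central identity \eqref{Id3} of Lemma~\ref{Lcr} at a vector realizing the norm of $X$, and to combine it with the spectral localizations of $A_0$, $A_1$ and $\Lambda_0$. By hypothesis $\mu:=\|X\|$ is an eigenvalue of $|X|$; fix a unit eigenvector $u$, $|X|u=\mu u$. Since $\mu>0$ we have $u\in\Ran(|X|)$, so by \eqref{Kisom} $U$ is isometric at $u$, giving $\|Uu\|=1$, $Xu=\mu Uu$ and $X^*Uu=\mu u$. It is convenient to translate the origin first: replacing $A$ by $A-cI$ with $c=(\gamma_\rl+\gamma_\rr)/2$ leaves \eqref{Ric0}, and hence $X$, $\mu$, $u$ and $U$, unchanged, replaces $\Lambda_0$ by $\Lambda_0-cI$, and centres the gap so that $\Delta=(-D/2,D/2)$ with $D:=|\Delta|$. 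In these coordinates $\spec(A_1)\subset\bbR\setminus(-D/2,D/2)$ forces $A_1^2\geq (D/2)^2I$, while $\sigma_0\subset[-D/2+d,\,D/2-d]$ (a consequence of $\dist(\sigma_0,\sigma_1)=d$ together with $\gamma_\rl,\gamma_\rr\in\sigma_1$) forces $\|A_0\|\leq D/2-d$; note also $d\leq D/2$, so that $(D,d,\|V\|)\in\Omega_2$ and $M_2\geq 1$ by Remark~\ref{RemM2}.

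With $p=\|A_0u\|^2$, $q=\|B^*u\|^2$, $r=\|A_1Uu\|^2$, $s=\|BUu\|^2$ and $t=\|\Lambda_0u\|^2$, the bounds just listed and $\|B\|=\|B^*\|=\|V\|$ give
\[
r\geq \frac{D^2}{4},\qquad p\leq\bigl(\tfrac{D}{2}-d\bigr)^2,\qquad q\leq\|V\|^2,\qquad s\leq\|V\|^2,
\]
while $\spec(\Lambda_0)=\omega_0\subset\Delta$ yields $\Lambda_0^2<(D/2)^2I$, so $0\leq t<D^2/4\leq r$. Identity \eqref{Id3} can then be solved for the norm,
\[
\mu^2=\frac{p+q-t}{r+s-t},
\]
the denominator being strictly positive. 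We may assume $\mu\geq 1$, since otherwise $\|X\|<1\leq M_2$ and there is nothing to prove; then $p+q\geq r+s$, so the right-hand side is nondecreasing in $t$ and nonincreasing in $r+s$. Thus the sharpest estimate is obtained with $r+s$ at its minimum $D^2/4$, with $p$ and $q$ at their maxima, and with $t$ as large as the constraints allow.

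The decisive and hardest step is a sufficiently precise upper bound for $t=\|\Lambda_0u\|^2$, equivalently, control of how deeply $\omega_0$ is driven into the gap. The off-the-shelf enclosures — either $t<D^2/4$ (from $\omega_0\subset\Delta$) or the shift bound \eqref{incl} of Theorem~\ref{T-KMMa} — are too weak: inserting them leaves $r+s-t$ too large and produces a constant strictly worse than $M_2$. The extra leverage I would use is the symmetric counterpart of \eqref{Lam0} on $\fA_1$: applied to the reducing subspace $\cG(-X^*)$ the construction produces a bounded self-adjoint $\Lambda_1$ with $\spec(\Lambda_1)=\omega_1\subset\bbR\setminus\Delta$, and an elementary computation (using $X^*Uu=\mu u$ and $Uu\in\Dom(A_1)$, exactly as in the proof of Lemma~\ref{Lcr}) gives the balance relation
\[
t+\tau=p+q+r+s,\qquad \tau:=\|\Lambda_1Uu\|^2 .
\]
Since $\omega_1\subset\bbR\setminus\Delta$ bounds $\tau$ from below, this bounds $t$ from above in a way that, unlike the crude enclosures, is tied to the very quantities $r+s$ and $p+q$ entering $\mu^2$; I also expect to need the first-moment identity $\lal u,\Lambda_0u\ral=\lal u,A_0u\ral+\mu\,\Real\lal B^*u,Uu\ral$, a by-product of the computation leading to \eqref{Id1}, in order to close the estimate.

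Feeding these relations into $\mu^2=(p+q-t)/(r+s-t)$ and optimizing over the admissible configurations should collapse the system to
\[
\mu^2\leq 1+\frac{2\|V\|^2}{D^2}-\frac{2}{D^2}\sqrt{dD-\|V\|^2}\,\sqrt{(D-d)D-\|V\|^2},
\]
which is precisely $M_2(D,d,\|V\|)^2$. The genuine obstacle is therefore the simultaneous sharp control of $t$ and $\tau$ together with the ensuing algebra, not the use of \eqref{Id3} itself. As consistency checks I would confirm that the extremal configuration attains $r=D^2/4$, $p=(D/2-d)^2$ and $q=\|V\|^2$ simultaneously, that the bound degenerates to $M_2=1$ on $\partial\Omega_{12}$ where $\|V\|=\sqrt{d(D-d)}$ (matching Theorem~\ref{T-MS} by continuity), and that $M_2\leq\sqrt{2}$, in accordance with Remark~\ref{RemM2}.
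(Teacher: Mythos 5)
Your overall strategy coincides with the paper's up to the decisive step: center the gap, evaluate the identity \eqref{Id3} at a unit norming eigenvector $u$ of $|X|$, dispose of the case $\mu\leq 1$ via $M_2\geq 1$, and reduce everything to bounding $\mu^2=(p+q-t)/(r+s-t)$ with $p=\|A_0u\|^2$, $q=\|B^*u\|^2$, $r=\|A_1Uu\|^2$, $s=\|BUu\|^2$, $t=\|\Lambda_0u\|^2$. You are also right that the crude enclosures of $t$ are insufficient. However, the extra leverage you propose does not close the argument. Your balance relation $t+\tau=p+q+r+s$ is in fact correct (it follows from $\|Lx\|^2+\|Ly\|^2=(1+\mu^2)(p+q+r+s)$ together with $\|Lx\|^2=(1+\mu^2)t$ and $\|Ly\|^2=(1+\mu^2)\tau$, since $Uu$ is an eigenvector of $XX^*$), but combined with $\tau\geq(D/2)^2$ it only gives $t\leq p+q+s+\bigl(r-(D/2)^2\bigr)$, and this, together with all the a priori bounds you list and your first-moment identity, still admits configurations violating \eqref{XMb}. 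Concretely, take $D=2$, $d=1$ (so $\gamma=1$, $a=\gamma-d=0$) and $\|V\|=1.2$; then $M_2(D,d,\|V\|)^2=\|V\|^2=1.44$, whereas the data $p=0$, $q=1.44$, $r=1$, $s=0$, $t=0.63$ satisfy every constraint in your system ($r_V=0.8$ so $t\leq(a+r_V)^2=0.64$; $\tau=1.81\geq 1$; the first-moment identity holds with $\lal u,\Lambda_0u\ral=0$ because $A_0u=0$ and $BUu=0$) and yet yield $(p+q-t)/(r+s-t)\approx 2.19$. So your listed relations do not logically imply the bound; the optimization you describe does not collapse to $M_2^2$.

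The missing idea is the inequality the paper derives in \eqref{EsCr}: because $u$ is an eigenvector of $|X|$ for the eigenvalue $\mu=\|X\|$, the two factors $(I+|X|^2)^{\pm1/2}$ in \eqref{Lam0} contribute exactly cancelling constants, whence
$\|\Lambda_0u\|\leq\|(A_0+BX)u\|=\|A_0u+\mu BUu\|\leq\|A_0u\|+\mu\|BUu\|$, i.e. $\sqrt{t}\leq\sqrt{p}+\mu\,\sqrt{s}$. This is the coupling of $t$ to $s$ (with the factor $\mu$ — the norming hypothesis is used here a second time) that kills the configuration above: with $a=0$ and $s=0$ it forces $t=0$. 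In the regime $\|\Lambda_0u\|>\|A_0\|$ it gives $s\geq\mu^{-2}\bigl(\|\Lambda_0u\|-\|A_0u\|\bigr)^2$, which, inserted into the denominator of \eqref{mu2star}, reduces the whole problem to maximizing the one-variable function $\varphi(z)=\bigl(\|B\|^2+2z(a-z)\bigr)/(\gamma^2-z^2)$ over $z\in[0,\gamma)$, whose maximum is exactly $M_2(|\Delta|,d,\|V\|)^2$. Note that your first-moment identity controls only $\lal u,\Lambda_0u\ral$ and not $\|\Lambda_0u\|$, so it cannot substitute for this estimate. Without \eqref{EsCr} (or an equivalent coupling of $\|\Lambda_0u\|$ to $\|BUu\|$), the proof as proposed has a genuine gap.
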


\begin{proof}
Throughout the proof we assume, without loss of generality, that the
gap $\Delta$ is centered at zero, i.e.\,
$\gamma_{\rr}=-\gamma_{\rl}=\gamma$; otherwise, one replaces $A_0$
and $A_1$ by $A'_0=A_0-c I$ and $A'_1=A_1-c I$, respectively, where
$c=(\gamma_{\rl}+\gamma_{\rr})/2$ is the center of $\Delta$. The
assumption that $ \sigma_0\subset\Delta=(-\gamma,\gamma) $ and
$d=\dist(\sigma_0,\sigma_1)>0$ means that $\sigma_0\subset[-a,a]$
with $a=\gamma-d$ and $\|A_0\|=a$.

Suppose that $\mu$ is an eigenvalue of $|X|$ such that
$\mu=\|X\|=\bigl\||X|\bigr\|$  and let
$u$, $\|u\|=1$, be an eigenvector of $|X|$ associated with this
eigenvalue, i.e. $|X|u=\mu u$. If $\mu=\|X\|\leq 1$ then, under
condition \eqref{V2dd}, the bound \eqref{XMb} holds automatically by
the first equality in \eqref{M2bound}. Further on in the proof we
will always assume that $\mu>1.$

Let $\Lambda_0$ be as in \eqref{Lam0}. Since
$\spec(\Lambda_0)=\spec(A_0+BX)$, from Theorem \ref{T-KMMa} it
follows that $\spec(\Lambda_0)=\omega_0$ and then \eqref{incl}
yields
\begin{equation}
\label{L0bound}
0\leq\|\Lambda_0 u\|\leq a+r_V<\gamma,
\end{equation}
where $r_V$ is given by \eqref{rV} with $|\Delta|=2\gamma=2(a+d)$.
At the same time
$$
\|A_1Uu\|^2+\|BUu\|^2\geq \|A_1Uu\|^2\geq \gamma^2,
$$
taking into account that $u\in\Ran(|X|)$, $\|u\|=1$ and then  $\|Uu\|=1$ by
\eqref{Kisom}. Hence, by \eqref{L0bound}
$$
\|A_1Uu\|^2+\|BUu\|^2-\|\Lambda_0 u\|^2\geq \gamma^2-(a+r_V)^2>0
$$
and the identity \eqref{Id3} in Lemma \ref{Lcr} implies
\begin{equation}
\label{mu2}
\mu^2=\frac{\|A_0u\|^2+\|B^*u\|^2-\|\Lambda_0u\|^2}{\|A_1Uu\|^2+\|BUu\|^2-
\|\Lambda_0u\|^2}.
\end{equation}
Since
$$
\|A_0u\|\leq a, \quad \|A_1Uu\|\geq \gamma,\text{\, and \,} \|B^*u\|\leq\|B\|,
$$
from \eqref{mu2} it follows that
\begin{equation}
\label{mu2star}
\mu^2\leq\frac{a^2+\|B\|^2-\|\Lambda_0u\|^2}{\gamma^2+\|BUu\|^2-
\|\Lambda_0u\|^2}.
\end{equation}
Because of $\mu>1$, from \eqref{mu2star} one infers that
\begin{equation}
\label{InImp1}
a^2+\|B\|^2>\gamma^2+\|BUu\|^2.
\end{equation}
As for the quantity $\|\Lambda_0u\|$,  in view of \eqref{L0bound} we
have two options: either
\begin{equation}
\label{LlesA0}
0\leq\|\Lambda_0u\|\leq a
\end{equation}
or
\begin{equation}
\label{LgreA0}
a<\|\Lambda_0u\|\leq a+r_V.
\end{equation}
Since for any $s,t\in\bbR$ such that $t<s$ the function
$f(x):=\mbox{\small$\dfrac{s-x}{t-x}$}$ is increasing at $x<t$, in the case \eqref{LlesA0}
combining inequalities \eqref{mu2star} and \eqref{InImp1} yields
\begin{equation}
\label{CaseL}
\mu^2\leq \frac{\|B\|^2}{\gamma^2+\|BUu\|^2-
a^2}\leq \frac{\|B\|^2}{\gamma^2-a^2} \qquad (\text{if \,}\|\Lambda_0u\|\leq a).
\end{equation}

In order to treat the case \eqref{LgreA0} properly, one notices that, due to \eqref{Lam0},
\begin{align}
\nonumber
\|\Lambda_0u\|=&\|(I+|X|^2)^{1/2}(A_0+BX)(I+|X|^2)^{-1/2}u\|\\
\nonumber
\leq&\frac{\sqrt{1+\|X\|^2}}{\sqrt{1+\mu^2}}\,\|A_0u+\mu BUu\|\\
\label{EsCr}
&=\|A_0u\|+\mu\|BUu\|,
\end{align}
taking into account that $|X|u=\mu u$\, at the first step and that
$\|X\|=\mu$ at the second. Since $\|A_0u\|\leq a$, from \eqref{EsCr}
one deduces that, in the case \eqref{LgreA0},
$\|BUu\|\geq \frac{1}{\mu}\bigl(\|\Lambda_0u\|-a\bigr)>0$
and then \eqref{mu2star} implies
\begin{equation}
\label{mu2b}
\mu^2\leq\frac{a^2+\|B\|^2-\|\Lambda_0u\|^2}{\gamma^2+
\frac{1}{\mu^2}\bigl(\|\Lambda_0u\|-a\bigr)^2-
\|\Lambda_0u\|^2}\qquad(\text{if }\|\Lambda_0u\|>a).
\end{equation}
Inequality \eqref{mu2b} transforms into
\begin{equation}
\label{mu2base}
\mu^2\leq \frac{\|B\|^2+2\|\Lambda_0u\|(a-\|\Lambda_0u\|)}{\gamma^2-\|\Lambda_0u\|^2}
\qquad(\text{if }\|\Lambda_0u\|>a).
\end{equation}
By combining \eqref{CaseL}
and \eqref{mu2base} one arrives at the estimate
\begin{equation}
\label{mu2b1}
\mu^2\leq\left\{\begin{array}{cl}
\varphi(a) & \text{if \,}\|\Lambda_0u\|\leq a,\\[1mm]
\varphi(\|\Lambda_0u\|)
& \text{if \,}\|\Lambda_0u\|>a,
\end{array}\right.
\end{equation}
where the function $\varphi(z)$ for $z\in[0,\gamma)$ is defined by
\begin{align}
\label{phiz}
\varphi(z):=&\frac{\|B\|^2+2z(a-z)}{\gamma^2-z^2}.
\end{align}
One observes that $\varphi(0)=\|B\|^2/\gamma^2>0$ and
$\varphi(z)\to -\infty$ as $z\to\gamma-0$ since
$$
\|B\|^2+2\gamma(a-\gamma)=\|V\|^2-d|\Delta|<0
$$
by hypothesis \eqref{V2dd}. Again taking into account \eqref{L0bound},
by \eqref{mu2b1} one concludes that in any case
\begin{equation}
\label{mu2phi}
\mu^2\leq \max_{z\in[0,\gamma)}\varphi(z).
\end{equation}
We notice that the function \eqref{phiz} already appeared in the proof
of Lemma~2.1 in \cite{AlMoSel}. There is a single point $z_0$ within
the interval $[0,\gamma)$ (in fact, $z_0\in[0,a+r_V]$) where the derivative of this
function is zero, namely
\begin{equation}
\label{z00}
z_0=\left\{\begin{array}{cl}
0 & \text{if}\quad a=0,\\[1mm]
\mbox{\small$\dfrac{2\gamma^2-\|B\|^2}{2a}-
\sqrt{\left(\dfrac{2\gamma^2-\|B\|^2}{2a}\right)^2 -\gamma^2}$} &
\text{if}\quad a>0.
\end{array}\right.
\end{equation}
At $z_0$ the function $\varphi(z)$ attains its
maximum on $[0,\gamma)$, i.e.
\begin{equation}
\label{phimax}
\max_{z\in[0,\gamma)}\varphi(z)=\varphi(z_0).
\end{equation}
By inspection,
$\varphi(z_0)=M_2(2\gamma,\gamma-a,\|B\|)^2=M_2(|\Delta|,d,\|V\|)^2$,
where the function $M_2(D,d,\slv)$ is given by \eqref{M2ef}.
Combining this with \eqref{mu2phi} and \eqref{phimax} completes the
proof.
\end{proof}

From the two estimating functions introduced in
\eqref{M1ef} and \eqref{M2ef} we combine the total
estimating function
\begin{equation}
\label{Mtot}
M(D,d,\slv):=\left\{\begin{array}{cl}
M_1(D,d,{\slv}) & \text{if \,\,} 0\leq {\slv} <\sqrt{d(D-d)},\\[1mm]
M_2(D,d,{\slv}) & \text{if \,\,}
\sqrt{d(D-d)}\leq {\slv}< \sqrt{dD},\\
\end{array}\right.
\end{equation}
which is considered on the union $\Omega:=\Omega_1\cup\Omega_2$ of the
domains $\Omega_1$ and $\Omega_2$,
$$
\Omega=\left\{(D,d,{\slv})\in\bbR^3\, \bigl|\,\,\, D>0,\,\,\, 0< d\leq D/2,\,\,\,
0\leq {\slv}< \sqrt{dD}\,\, \right\}.
$$

\begin{remark}
\label{RemM} By Remarks \ref{RemM1} and \ref{RemM2}, the estimating
function $M(D,d,\slv)$ is continuous and uniformly bounded on the
whole set $\Omega$. It also admits a continuous extension to the
boundary $\partial\Omega$ of $\Omega$ (except for the intersection
of $\partial\Omega$ with the $D$ axis).  It should be
underlined, however, that the partial derivatives $\frac{\partial
M(D,d,\slv)}{\partial D}$, $\frac{\partial M(D,d,\slv)}{\partial
d}$, and $\frac{\partial M(D,d,\slv)}{\partial \slv}$ are
discontinuous when, for $d<D/2$, the point $(D,d,\slv)$ crosses the
common boundary $\partial\Omega_{12}$ of the sets $\Omega_1$ and
$\Omega_2$.
\end{remark}

\section{Subspace variation bound in the general case.\newline Proof of Theorem \ref{Th1}}
\label{Sec4}

The norm bound for the angular operator $X$ obtained in the previous
section for the special case where $|X|$ has an eigenvalue equal to
$\|X\|$ allows us to prove the following general subspace
perturbation bound.

\begin{theorem}
\label{ThTot} Assume Hypothesis \ref{HypD}. If
$\|V\|<\sqrt{d|\Delta|}$, then
\begin{equation}
\label{EEdif} \|\sE_A(\sigma_0)-\sE_L(\omega_0)\| \leq
\sin\bigl(\arctan M(|\Delta|,d,\|V\|)\bigr),
\end{equation}
where $\omega_0=\spec(L)\cap\Delta$ and $M(D,d,{\slv})$ is the
function defined by \eqref{Mtot}.
\end{theorem}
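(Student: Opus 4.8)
The plan is to reduce the projection bound to an estimate for the angular operator, handle the two pieces of the estimating function $M$ separately, and then remove by an approximation argument the extra hypothesis under which Theorem~\ref{ThMainM3} was established. First I would invoke \eqref{EEX}: under the standing assumption $\|V\|<\sqrt{d|\Delta|}$, Theorem~\ref{T-KMMa}(ii) furnishes the unique solution $X\in\cB(\fA_0,\fA_1)$ of \eqref{Ric0} with the properties \eqref{sigL}, and the perturbed spectral subspace $\fL_0=\Ran(\sE_L(\omega_0))$ coincides with $\cG(X)$, so that $\|\sE_A(\sigma_0)-\sE_L(\omega_0)\|=\sin(\arctan\|X\|)$. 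Since $t\mapsto\sin(\arctan t)$ is strictly increasing on $[0,\infty)$, the asserted inequality \eqref{EEdif} is equivalent to the operator estimate $\|X\|\le M(|\Delta|,d,\|V\|)$, and it is this bound that I would prove.

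Next I would split the range of $\|V\|$ according to the definition \eqref{Mtot}. On $0\le\|V\|<\sqrt{d(|\Delta|-d)}$ one has $M=M_1$, and $\|X\|\le M_1(|\Delta|,d,\|V\|)$ is exactly the content of Theorem~\ref{T-MS}. On the complementary range $\sqrt{d(|\Delta|-d)}\le\|V\|<\sqrt{d|\Delta|}$ one has $M=M_2$, and here I would distinguish cases. If $\|X\|\le 1$ the bound $\|X\|\le M_2(|\Delta|,d,\|V\|)$ is immediate from $M_2\ge 1$ (the first equality in \eqref{M2bound}); and if $\mu:=\|X\|=\bigl\||X|\bigr\|>1$ happens to be an \emph{eigenvalue} of $|X|$, the bound is precisely Theorem~\ref{ThMainM3}.

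The only remaining possibility is $\mu>1$ with $\mu$ lying in the continuous part of $\spec(|X|)$, and closing this gap is the heart of the matter. Here I would rerun the argument of Theorem~\ref{ThMainM3} along a Weyl sequence: choose unit vectors $u_n\in\Ran\bigl(\sE_{|X|}((\mu-\delta_n,\mu])\bigr)$ with $\delta_n\downarrow 0$, so that $\bigl\|\,|X|u_n-\mu u_n\bigr\|\to 0$, and replace the vector $Uu$ of that proof by $w_n:=\mu^{-1}Xu_n\in\Ran(X)\subset\Dom(A_1)$, for which $\|w_n-Uu_n\|\to 0$ and $\|w_n\|\to 1$. The single place where the unboundedness of $A_1$ could be fatal is the appearance of $\|A_1 w_n\|$ (and of $\langle B^*u_n,A_1 w_n\rangle$) in the analogues of \eqref{Id1}--\eqref{Id3}. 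This is exactly where the Riccati equation rescues the argument: by \eqref{Ric0} the operator $A_1X=XA_0+XBX-B^*$ is \emph{bounded}, whence $\|A_1 w_n\|=\mu^{-1}\|A_1Xu_n\|\le\mu^{-1}\|A_1X\|$ uniformly in $n$, while the spectral gap simultaneously gives the lower bound $\|A_1 w_n\|\ge\gamma\|w_n\|$ that keeps the denominator in \eqref{mu2} positive. I expect the domain control of this unbounded entry along the approximating sequence to be the only genuine obstacle.

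With $\|A_1 w_n\|$ thus controlled, the construction of Lemma~\ref{Lcr} should go through up to errors $o(1)$: the orthogonality $\langle Lx_n,Ly_n\rangle=0$ of the reducing subspaces $\cG(X)$ and $\cG(-X^*)$ holds \emph{exactly} for $x_n=(u_n,\mu w_n)$ and $y_n=(-X^*w_n,w_n)$ viewed in $\fA_0\oplus\fA_1$, whereas every functional-calculus simplification that used $|X|u=\mu u$ (such as $(I+|X|^2)^{\pm 1/2}u_n=(1+\mu^2)^{\pm 1/2}u_n+o(1)$) now leaves only vanishing remainders, all operators involved being bounded. Passing to the limit yields the inequality \eqref{mu2star}, after which the maximization of the function $\varphi$ of \eqref{phiz}, being uniform in the vectors and hence insensitive to the $o(1)$ terms, gives $\mu^2\le\varphi(z_0)=M_2(|\Delta|,d,\|V\|)^2$. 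Combining the two ranges establishes $\|X\|\le M(|\Delta|,d,\|V\|)$ throughout $\Omega$, and substituting this into \eqref{EEX} proves \eqref{EEdif}; only the routine continuity of $M_2$ (Remark~\ref{RemM2}) is needed to pass to the limit.
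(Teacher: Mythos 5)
Your proposal is correct, but for the difficult case it takes a genuinely different route from the paper. The reduction via \eqref{EEX}, the appeal to Theorem~\ref{T-MS} for $\|V\|<\sqrt{d(|\Delta|-d)}$, the trivial case $\|X\|\le 1$ via \eqref{M2bound}, and the appeal to Theorem~\ref{ThMainM3} when $\|X\|$ is an eigenvalue of $|X|$ are common to both arguments. The difference is how one handles $\mu=\|X\|>1$ when $\mu$ need not be an eigenvalue. The paper never confronts this case directly: it first notes that for $\dim\fA_0<\infty$ the operator $|X|$ is of finite rank, so $\|X\|$ is automatically an eigenvalue, and then treats infinite-dimensional $\fA_0$ by truncating $A_0$ and $B$ with finite-rank projections $P_n^{(0)}$, applying the finite-dimensional bound to each truncated pair $(\widehat{A}_n,\widehat{V}_n)$, and passing to the limit using strong resolvent convergence of $A_n$ and $L_n$ together with convergence of the spectral projections of a fixed interval $\Sigma_\varepsilon$; this costs the monotonicity of $M(D,d,\slv)$ in its last two arguments and some bookkeeping for the artificial point $0$ in the spectrum of $A_n\bigr|_{\widehat{\fH}_n^\perp}$. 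You instead rerun Lemma~\ref{Lcr} and the proof of Theorem~\ref{ThMainM3} along a Weyl sequence for $|X|$ at the spectral point $\mu$, and you correctly identify and defuse the one step that could fail: the unbounded entry $A_1$ acting on $w_n=\mu^{-1}Xu_n$ is controlled because $A_1X=XA_0+XBX-B^*$ is bounded by \eqref{Ric0}, so $\|A_1w_n\|\le\mu^{-1}\|A_1X\|$ uniformly while the spectral gap still gives $\|A_1w_n\|\ge\gamma\|w_n\|$; the exact orthogonality $\lal Lx_n,Ly_n\ral=0$ and the functional-calculus relations $(I+|X|^2)^{\pm 1/2}u_n=(1+\mu^2)^{\pm 1/2}u_n+o(1)$ then reproduce \eqref{Id3}, hence \eqref{mu2star}, up to vanishing errors, and the maximization of $\varphi$ in \eqref{phiz} is insensitive to these. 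Your route dispenses with the truncation machinery and with the monotonicity of $M$ altogether, at the price of redoing the identities with error terms; the paper's route keeps Lemma~\ref{Lcr} and Theorem~\ref{ThMainM3} as clean statements about genuine eigenvectors and confines all approximation to a standard strong-resolvent-convergence argument. In a full write-up you should record explicitly the uniform lower bound $\|A_1w_n\|^2+\|Bw_n\|^2-\|\Lambda_0u_n\|^2\ge\gamma^2-(a+r_V)^2+o(1)>0$, coming from \eqref{L0bound}, which legitimizes the division in the analogue of \eqref{mu2}, but you have already indicated why it survives the passage to approximate eigenvectors.
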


\begin{proof}
Assume, without loss of generality, that the gap $\Delta$
lies on the non-negative semiaxis, that is,
\begin{equation}
\label{gammas}
0\leq \gamma_{\rl}<\gamma_{\rr};
\end{equation}
otherwise, one replaces $A_0$ and $A_1$ by
$A'_0=A_0+\bigl(\frac{|\Delta|}{2}-c\bigr)I$ and
$A'_1=A_1+\bigl(\frac{|\Delta|}{2}-c\bigr)I$, respectively, where
$c=(\gamma_{\rl}+\gamma_{\rr})/2$.

First, we consider the case where the spectral subspace $\fA_0$ is
finite-dimensional. Theorem~\ref{T-KMMa}\,(ii) ensures the existence
of a unique angular operator $X$ for the pair of subspaces
$\fA_0=\Ran\bigl(\sE_A(\sigma_0)\bigr)$ and
$\fL_0=\Ran\bigl(\sE_L(\omega_0)\bigr)$. Since $\dim(\fA_0)<\infty$,
the operator $X$ is of finite rank and so is its modulus $|X|$. Then
there is an eigenvalue $\mu$ of $|X|$ such that
$\mu=\bigl\||X|\bigr\|=\|X\|$. Hence, for
$\sqrt{d(|\Delta|-d)}\leq\|V\|<\sqrt{d|\Delta|}$,  the bound
\eqref{EEdif} follows by \eqref{EEX} and \eqref{Mtot} from
Theorem~\ref{ThMainM3}. For $\|V\|<\sqrt{d(|\Delta|-d)}$ this bound
is implied by Theorem~\ref{T-MS}. Therefore, for the case where
$\dim(\fA_0)<\infty$, the bound \eqref{EEdif} has been proven.

\tolerance 10000
If the subspace $\fA_0$ is infinite-dimensional, let
$\{P_n^{(0)}\}_{n\in\bbN}$ be a sequence of finite-dimensional
orthogonal projections in $\fA_0$ such that
$\Ran\bigl(P_n^{(0)}\bigr)\subset\fA_0$ and
$\slim\limits_{n\to\infty}P_n^{(0)}=I_{\fA_0}$. Using the
projections $P_n^{(0)}$ we introduce the block diagonal operator
matrices
$$
A_n=\left(\begin{array}{cc}
P_n^{(0)} A_0 P_n^{(0)} & 0 \\
0 & A_1
\end{array}
\right), \quad \Dom(A_n):=\Dom(A)\quad\bigl(=\fA_0\oplus\Dom(A_1)\bigr),
$$
which represent the corresponding truncations of the operator $A$
with finite rank parts in $\fA_0$. We also introduce the finite
rank operators
$$
V_n=\left(\begin{array}{cc}
0 & P_n^{(0)} B   \\
B^* P_n^{(0)} & 0
\end{array}
\right)
$$
and set $L_n=A_n+V_n$, $\Dom(L_n):=\Dom(A_n)=\Dom(A)$. The operators
$A_n$ and $V_n$ are self-adjoint. Hence, so are the operators $L_n$.

Obviously, for any $\lambda\in\bbC\setminus\bbR$ the following
operator identities hold:
\begin{align}
\label{SchA}
(A_n -\lambda I)^{-1}-(A-\lambda I)^{-1}&=(A_n -\lambda I)^{-1}S_n(A-\lambda I)^{-1},\\
\label{SchL}
(L_n -\lambda I)^{-1}-(L-\lambda I)^{-1}&=(L_n -\lambda I)^{-1}(S_n+V-V_n)(L-\lambda I)^{-1},
\end{align}
where $S_n$ is the bounded operator on $\fH$ given by
\begin{equation}
\label{Sn}
S_n=\left(\begin{array}{cc}
A_0-P_n^{(0)} A_0 P_n^{(0)} & 0   \\
0 & 0
\end{array}
\right).
\end{equation}
By, e.g., \cite[Theorem 2.5.2]{BirSol} we have
$\slim\limits_{n\to\infty} V_n=V$, \,$\slim\limits_{n\to\infty}
(P_n^{(0)} A_0 P_n^{(0)})=A_0$, and then, due to~\eqref{SchA}--\eqref{Sn},
\begin{align}
\label{slimAL}
\slim_{n\to\infty} (A_n-\lambda I)^{-1}&=(A-\lambda
I)^{-1}\text{\, and  \,}
\slim_{n\to\infty}(L_n-\lambda
I)^{-1}=(L-\lambda I)^{-1}
\end{align}
for any $\lambda\in\bbC\setminus\bbR$, which means that both sequences $\{A_n\}_{n\in\bbN}$ and
$\{L_n\}_{n\in\bbN}$ are convergent in strong resolvent sense
(see, e.g., \cite[Section VIII.7]{Reed:Simon}).

Let $\widehat{A}_n$ and $\widehat{V}_n$ denote the parts of the operators
$A_n$ and $V_n$ associated with their reducing subspace
\begin{equation}
\label{fHn}
\widehat{\fH}_n=\widehat{\fA}_0^{(n)}\oplus\fA_1,
\end{equation}
where $\widehat{\fA}_0^{(n)}=\Ran\bigl(P_n^{(0)}\bigr)$. Clearly,
the operator $\widehat{A}_n$ is block diagonal with respect to the
decomposition \eqref{fHn},
$\Dom(\widehat{A}_n)=\widehat{\fA}_0^{(n)}\oplus\Dom(A_1)$, and
$\widehat{A}_n\big|_{\fA_1}=A_1$. Further, for the spectral set
$\widehat{\sigma}_0^{(n)}:=\spec\bigl(\widehat{A}_n\big|_{\widehat{\fA}_0^{(n)}}\bigr)$
we have the inclusion
\begin{equation}
\label{sisi0}
\widehat{\sigma}_0^{(n)}\subset[\gamma_{\rl} +d,\gamma_{\rr} -d]
\end{equation}
and, thus,
\begin{equation}
\label{dnd} d_n:=\dist\biggl(
\spec\bigl(\widehat{A}_n\big|_{\widehat{\fA}_0^{(n)}}\bigr),
\spec\bigl(\widehat{A}_n\big|_{\fA_1}\bigr)\biggr)=\dist(\widehat{\sigma}_0^{(n)},\sigma_1)\geq
d.
\end{equation}
By its construction, the finite rank operator $\widehat{V}_n$ is off-diagonal with
respect to the decomposition~\eqref{fHn} and
\begin{equation}
\label{VnlV}
\|\widehat{V}_n\|\leq\|V\|.
\end{equation}
By the hypothesis we have $\|V\|<\sqrt{d|\Delta|}$. Hence, from
\eqref{VnlV} and \eqref{dnd} it follows that
$\|\widehat{V}_n\|<\sqrt{d|\Delta|}\leq \sqrt{d_n|\Delta|}$. Then
Theorem \ref{T-KMMa} (i) implies that the spectrum of
$\widehat{L}_n:=\widehat{A}_n+\widehat{V}_n$ consists of two
disjoint components $\widehat{\omega}_0^{(n)}$ and
$\widehat{\omega}_1^{(n)}$ such that
\begin{equation}
\label{sisi}
\widehat{\omega}_0^{(n)}\subset[\gamma_{\rl} +d_n-r_V^{(n)},\gamma_{\rr} -d_n+r_V^{(n)}]\subset\Delta
\quad\text{and}\quad
\widehat{\omega}_1^{(n)}\subset\bbR\setminus\Delta,
\end{equation}
where $r_V^{(n)}$ is given by
\begin{equation*}
r_V^{(n)}=\|\widehat{V}_n\|\tan\left(\frac{1}{2}\arctan\frac{2\|\widehat{V}_n\|}{|\Delta|-d_n}\right).
\end{equation*}
Since $d\leq d_n\leq \frac{|\Delta|}{2}$ and $\widehat{V}_n$
satisfies \eqref{VnlV}, one easily verifies that $ d_n-r_V^{(n)}\geq
d-r_V $ with $r_V$ given by \eqref{rV}. Therefore, from the first
inclusion in \eqref{sisi} it follows that
\begin{equation}
\label{si0n}
\widehat{\omega}_0^{(n)}\subset[\gamma_{\rl} +d-r_V,\gamma_{\rr} -d+r_V]\quad\text{for any \,}n\in\bbN.
\end{equation}
Furthermore, since the spectral subspace
$\widehat{\fA}_0^{(n)}=\Ran\bigl(\sE_{\widehat{A}_n}(\widehat{\sigma}_0^{(n)})\bigr)$
is finite-dimensional, the bound
\eqref{EEdif} applies to the spectral projections
$\sE_{\widehat{A}_n}(\widehat{\sigma}_0^{(n)})$ and
$\sE_{\widehat{L}_n}(\widehat{\omega}_0^{(n)})$:
\begin{align}
\label{EEdif1}
\|\sE_{\widehat{A}_n}(\widehat{\sigma}_0^{(n)})-\sE_{\widehat{L}_n}(\widehat{\omega}_0^{(n)})\|
&\leq \sin\bigl(\arctan M(|\Delta|,d_n,\|\widehat{V}_n\|)\bigr).
\end{align}
Observing that the function $M(D,d,{\slv})$ is monotonously
increasing as the second argument decreases and/or the third one
increases, by \eqref{dnd} and \eqref{VnlV} from \eqref{EEdif1} one
infers that
\begin{align}
\label{EEdif2}
\|\sE_{\widehat{A}_n}(\widehat{\sigma}_0^{(n)})-\sE_{\widehat{L}_n}(\widehat{\omega}_0^{(n)})\|
&\leq \sin\bigl(\arctan M(|\Delta|,d,\|V\|)\bigr).
\end{align}
Now for an arbitrary $\varepsilon$ such that $0<\varepsilon<d-r_V$
we set
$\Sigma_\varepsilon:=(\gamma_{\rl}+\varepsilon,\gamma_{\rr}-\varepsilon)$.
Obviously, by \eqref{sisi0} and \eqref{si0n}, the open interval
$\Sigma_\varepsilon$ contains both sets
$\widehat{\sigma}_0^{(n)}$ and~$\widehat{\omega}_0^{(n)}$. Hence,
$\sE_{\widehat{A}_n}(\widehat{\sigma}_0^{(n)})=\sE_{\widehat{A}_n}(\Sigma_\varepsilon)$
and
$\sE_{\widehat{L}_n}(\widehat{\omega}_0^{(n)})=\sE_{\widehat{L}_n}(\Sigma_\varepsilon)$.
Then inequality \eqref{EEdif2} may be rewritten as
\begin{align}
\label{EKMM5}
\|\sE_{\widehat{A}_n}(\Sigma_\varepsilon)-\sE_{\widehat{L}_n}(\Sigma_\varepsilon)\|
&  \leq \sin\bigl(\arctan M(|\Delta|,d,\|V\|)\bigr).
\end{align}

Clearly, the spectrum of the part
$L_n\bigr|_{\widehat{\fH}_n^\perp}$ of the operator $L_n$ associated
with its reducing subspace
$\widehat{\fH}_n^\perp=\fH\ominus\widehat{\fH}_n$ consists of the
single point zero and the same holds for the spectrum of the
restriction $A_n\bigr|_{\widehat{\fH}_n^\perp}$, i.e.
\begin{equation}
\label{spAn}
\spec\bigl(L_n\bigr|_{\widehat{\fH}_n^\perp}\bigr)=
\spec\bigl(A_n\bigr|_{\widehat{\fH}_n^\perp}\bigr)=\{0\}.
\end{equation}
By \eqref{gammas} this means that none of the sets
$\spec\bigl(L_n\bigr|_{\widehat{\fH}_n^\perp}\bigr)$ and
$\spec\bigl(A_n\bigr|_{\widehat{\fH}_n^\perp}\bigr)$ intersects the
in\-ter\-val $\Sigma_\varepsilon$. Hence,  \eqref{EKMM5} yields
\begin{align}
\label{E5m}
\|\sE_{{A}_n}(\Sigma_\varepsilon)-\sE_{L_n}(\Sigma_\varepsilon)\|
&\leq\sin\bigl(\arctan M(|\Delta|,d,\|V\|)\bigr).
\end{align}
Meanwhile, equalities \eqref{spAn} considered together with the
inclusions \eqref{sisi0} and \eqref{si0n} imply
\begin{align}
\nonumber
(\gamma_{\rl},\gamma_{\rl}+d)\subset\varrho(A_n) &\text{\, and \,}
(\gamma_{\rr}-d,\gamma_{\rr})\subset\varrho(A_n),\\
\nonumber
(\gamma_{\rl},\gamma_{\rl}+d-r_V)\subset\varrho(L_n) &\text{\, and \,}
(\gamma_{\rr}-d+r_V,\gamma_{\rr})\subset\varrho(L_n).
\end{align}
Then, from the strong resolvent convergence \eqref{slimAL} of the
sequences $\{A_n\}_{n\in\bbN}$ and $\{L_n\}_{n\in\bbN}$, it follows
(see, e.g., \cite[Theorem VIII.24]{Reed:Simon}) that for any
$\varepsilon$ such that $0<\varepsilon<d-r_V$
\begin{equation*}
\slim_{n\to\infty}\sE_{A_n}(\Sigma_\varepsilon)=\sE_A(\Sigma_\varepsilon)
\text{\, and
\,}\slim_{n\to\infty}\sE_{L_n}(\Sigma_\varepsilon)=\sE_{L}(\Sigma_\varepsilon).
\end{equation*}
Passing to the limit as $n\to\infty$ in \eqref{E5m}, one
obtains
\begin{align*}
\|\sE_A(\Sigma_\varepsilon)-\sE_L(\Sigma_\varepsilon)\|
&\leq\sin\bigl(\arctan M(|\Delta|,d,\|V\|)\bigr),
\end{align*}
which is equivalent to \eqref{EEdif} since both spectral sets
$\sigma_0$ and $\omega_0$ are subsets of the interval
$\Sigma_\varepsilon$ $\bigl($see Theorem \ref{T-KMMa}\,(i)$\bigr)$.
\end{proof}

\begin{remark}
\label{RemOpt1} The bound \eqref{EEdif} is sharp. For $\|V\|<
\sqrt{d(|\Delta|-d)}$, this has been established in \cite{MotSel}
(see \cite[Remark 5.6\,(i)]{MotSel}). For $\sqrt{d(|\Delta|-d)}\leq
\|V\|<\sqrt{d|\Delta|}$ the sharpness of \eqref{EEdif} is proven by
\cite[Remark~2.3]{AlMoSel}. For convenience of the reader, below we
reproduce the corresponding examples from \cite{{AlMoSel}}
and \cite{MotSel} that prove the optimality of the bound \eqref{EEdif}.
\end{remark}

\begin{example}[\cite{AlMoSel}]
\label{Ex-AlMoSel}
Let $\fH_0=\bbC$ and $\fH_1=\bbC^2$. Assuming that $0\leq a<\gamma$ and $b_1,b_2\geq 0$, we set
$$
A_0=a, \quad A_1=\left(\begin{array}{rc}
-\gamma & 0\\
0  & \gamma
\end{array}\right),\text{ and }
B=\left(b_1 \,\,\, b_2\right).
$$
The operators ($3\times 3$ matrices) $A$, $V$, and $L$ are defined
on $\fH=\fH_0\oplus\fH_1=\bbC^3$ by equalities \eqref{Adiag},
\eqref{Voff}, and \eqref{Ltot}, respectively. The
spectrum $\sigma_0=\{a\}$ of $A_0$ lies in the gap
$\Delta=(-\gamma,\gamma)$ of the spectrum
$\sigma_1=\{-\gamma,\gamma\}$ of $A_1$. Also notice that
$d=\dist(\sigma_0,\sigma_1)=\gamma-a$ and $|\Delta|=2\gamma$.

First, consider the case where $b_1=0$ and, thus, $\|V\|=\|B\|=b_2$.
In this case, for any $b_2\geq 0$ satisfying
$b^2_2<2\gamma(\gamma+a)$, i.e., for
$\|V\|<\sqrt{|\Delta|(|\Delta|-d)}$ the matrix $L$ has a single
eigenvalue within the interval $\Delta$; two other eigenvalues of
$L$ are in $\bbR\setminus\Delta$. For the difference of the
eigenprojections $\sE_A(\sigma_0)$ and $\sE_L(\omega_0)$ we have
\begin{equation}
\label{EEex0}
\|\sE_A(\sigma_0)-\sE_L(\omega_0)\|=
\text{\small$\frac{2\|V\|}{d+\sqrt{d^2+4\|V\|^2}}$},
\end{equation}
where, as usually, $\omega_0=\spec(L)\cap\Delta$. Since
$\sqrt{|\Delta|(|\Delta|-d)}>\mbox{\small$\displaystyle{\frac{1}{2}}$}\sqrt{d(|\Delta|-2d)}$,
equality \eqref{EEex0} proves the optimality of the bound
\eqref{EEdif} for
$$
\|V\|\leq \mbox{\small$\displaystyle{\frac{1}{2}}$}\sqrt{d(|\Delta|-2d)}
$$
(see definition \eqref{M1x} of the restriction $M\bigr|_{\Omega_1^{(0)}}=M_1\bigr|_{\Omega_1^{(0)}}$
of the function $M$ onto the set $\Omega_1^{(0)}$).

Second, assume that $b$ is a positive number such that
\begin{equation}
\label{condex}
\sqrt{\gamma^2-a^2}\leq b<\sqrt{2\gamma(\gamma-a)}
\end{equation}
and let (cf. formula \eqref{z00})
\begin{equation}
\label{z0}
z_0:=\left\{\begin{array}{cl}
0 & \text{if}\quad a=0,\\[1mm]
\mbox{\small$\dfrac{2\gamma^2-b^2}{2a}-
\sqrt{\left(\dfrac{2\gamma^2-b^2}{2a}\right)^2 -\gamma^2}$} &
\text{if}\quad a>0.
\end{array}\right.
\end{equation}
The second inequality in \eqref{condex} implies $z_0\in\Delta$.
Note that, under condition \eqref{condex}, for
$$
t:=\frac{1}{2\gamma b^2}\bigl(b^2(\gamma-z_0)+(\gamma^2-z_0^2)(a-z_0)\bigr)
$$
one has $0\leq t<1$ (actually, $t\leq 1/2$). Then set
\begin{equation}
\label{b1b2-3}
b_1:=\sqrt{1-t}\,b, \quad b_2:=\sqrt{t}\,b.
\end{equation}
If $b_1$ and $b_2$ are introduced by \eqref{b1b2-3}, then $\|V\|=b$.
Furthermore, in this case \eqref{condex} is equivalent to
\begin{equation}
\label{Mcex1}
\sqrt{d(|\Delta|-d)}\leq\|V\|<\sqrt{d|\Delta|}
\end{equation}
and the number $z_0$ given by \eqref{z0} represents the single
eigenvalue of the matrix $L$ within the interval $\Delta$, that is,
$\omega_0=\spec(L)\cap\Delta=\{z_0\}$. An explicit computation of
the only eigenvector of $L$ corresponding to the eigenvalue $z_0$
results in
\begin{equation}
\label{EEex01}
\|\sE_A(\sigma_0)-\sE_L(\omega_0)\|=
\sin\bigl(\arctan M_2(|\Delta|,d,\|V\|)\bigr).
\end{equation}
Taking into account definition \eqref{Mtot} of the function $M$,
equality \eqref{EEex01} proves the sharpness of the bound
\eqref{EEdif} for $V$ satisfying \eqref{Mcex1}.
\end{example}

\begin{example}[\cite{MotSel}]
\label{Ex-MS-5.5} Let $\fH_0=\fH_1=\bbC^2$. Assume that $A_0$,
$A_1$, and $B$ are $2\times2$ matrices given respectively by
$$
A_0=\left(\begin{array}{rc}
-a & 0\\
0  & a
\end{array}\right),\,\,
A_1=\left(\begin{array}{rc}
-\gamma & 0\\
0  & \gamma
\end{array}\right),\text{ and }
B=\left(\begin{array}{cc}
b_1 & b_2\\
b_2  & b_1
\end{array}\right),
$$
where $0\leq a<\gamma$ and $b_1,b_2\geq 0$. Let the operators
($4\times4$ matrices) $A$, $V$, and $L$ be defined on
$\fH=\fH_0\oplus\fH_1=\bbC^4$ by equalities \eqref{Adiag},
\eqref{Voff}, and \eqref{Ltot}, respectively. Clearly, the spectrum
$\sigma_0=\{-a,a\}$ of $A_0$ lies in the gap
$\Delta=(-\gamma,\gamma)$ of the spectrum
$\sigma_1=\{-\gamma,\gamma\}$ of $A_1$. We also note that
$d=\dist(\sigma_0,\sigma_1)=\gamma-a$, $|\Delta|=2\gamma$, and
$\|V\|=\|B\|=b_1+b_2$.

One verifies by inspection that the $2\times 2$ matrix
$$
X=\left(\begin{array}{rr}
\varkappa_1 & \varkappa_2\\
-\varkappa_2  & -\varkappa_1
\end{array}\right),
$$
where
\begin{align*}
\varkappa_1&=\frac{2b_1\sqrt{(\gamma+a)^2+4b_2^2}}{(\gamma+a)\sqrt{(\gamma-a)^2+4b_1^2}
+(\gamma-a)\sqrt{(\gamma+a)^2+4b_2^2}},\\
\varkappa_2&=\frac{2b_2\sqrt{(\gamma-a)^2+4b_1^2}}{(\gamma+a)\sqrt{(\gamma-a)^2+4b_1^2}
+(\gamma-a)\sqrt{(\gamma+a)^2+4b_2^2}},
\end{align*}
is a solution to the operator Riccati equation \eqref{Ric0}.
Moreover, under condition
$\|V\|<\sqrt{d|\Delta|}=\sqrt{2\gamma(\gamma-a)}$ the
spectrum of $A_0+BX$ lies in the interval
$\Delta$, while both eigenvalues of $A_1-B^*X^*$ are in
$\bbR\setminus\Delta$. Applying, e.g., \cite[Theorem 2.3]{KMM3} one
concludes that the graph subspace $\cG(X)=\{x\oplus
Xx|\,\,x\in\fH_0\}$ is the spectral subspace of $L$ associated with
the spectral set $\omega_0=\spec(L)\cap\Delta$. By \eqref{EEX} this
yields
\begin{equation}
\label{EEex}
\|\sE_A(\sigma_0)-\sE_L(\omega_0)\|=\sin\bigl(\arctan(\varkappa_1+\varkappa_2)\bigr),
\end{equation}
taking into account that $\|X\|=\varkappa_1+\varkappa_2$.

Now pick up an arbitrary $b$ satisfying
\begin{equation}
\label{Ein1}
\mbox{\small$\dfrac{1}{2}$}\sqrt{2(\gamma-a)a}<b<\sqrt{\gamma^2-a^2}
\end{equation}
and set
\begin{equation}
\label{b1b2}
b_1=\mbox{\small$\dfrac{1}{2}$}(b+\beta)\,\text{ and }\,
b_2=\mbox{\small$\dfrac{1}{2}$}(b-\beta),
\end{equation}
where
$$
\beta=\left\{\begin{array}{cl}
0 & \text{if }\, a=0,\\
\dfrac{1}{a}\left(\sqrt{\gamma^2 b^2+a^2(\gamma^2-a^2-b^2)}-\gamma b\right) & \text{if }\, a>0.
\end{array}\right.
$$
The positivity of $b_1$ is obvious. The positivity of $b_2$ is
implied by the first inequality in \eqref{Ein1}. Since
$\|V\|=\|B\|=b_1+b_2=b$, the pair of inequalities \eqref{Ein1} is
equivalent to
\begin{equation}
\label{Mcex2}
\mbox{\small$\dfrac{1}{2}$}\sqrt{d(|\Delta|-2d)}<\|V\|< \sqrt{d(|\Delta|-d)}.
\end{equation}
Evaluation of the sum $\varkappa_1+\varkappa_2$ for $b_1$ and $b_2$
given by \eqref{b1b2} reduces \eqref{EEex} to
\begin{equation}
\label{EEex1}
\|\sE_A(\sigma_0)-\sE_L(\omega_0)\|=\sin\bigl(\arctan M_{\Omega_1^{(1)}}(|\Delta|,d,\|V\|)\bigr),
\end{equation}
where
$M_{\Omega_1^{(1)}}:=M_1\bigr|_{\Omega_1^{(1)}}=M\bigr|_{\Omega_1^{(1)}}$
is the restriction \eqref{M11} of the function $M$ onto the set
$\Omega_1^{(1)}$. Therefore, equality \eqref{EEex1} proves the sharpness
of the bound \eqref{EEdif} for $V$ satisfying \eqref{Mcex2}.
\end{example}

Theorem \ref{Th1} is nothing but a corollary of Theorem \ref{ThTot}.

\begin{proof}[Proof of Theorem \ref{Th1}]
Set $\fA_0=\Ran\bigl(\sE_A(\sigma_0)\bigr)$ and
$\fA_1=\Ran\bigl(\sE_A(\sigma_1)\bigr)$. With respect to the
orthogonal decomposition $\fH=\fA_0\oplus\fA_1$ the operators $A$
and $V$ are block operator matrices of the form \eqref{Adiag} and
\eqref{Voff}, respectively. The length of the gap $\Delta$ satisfies
the estimate $|\Delta|\geq 2d$ and, hence, condition \eqref{V2d}
implies $\|V\|<\sqrt{d|\Delta|}$. Then by Theorem
\ref{ThTot} we have estimate \eqref{EEdif}. It remains to
observe that, given the values of $\|V\|$ and $d$ satisfying
\eqref{V2d},  $M(D,d,\|V\|)$ is a non-increasing function of the
variable $D$, $D\geq 2d$. For $D$ varying in the interval
$[2d,\infty)$ it attains its maximal value at $D=2d$ and this value
equals
\begin{equation}
\nonumber
\max\limits_{D: \,D\geq 2d}M(D,d,\|V\|)=M(2d,d,\|V\|=\dfrac{\|V\|}{d}.
\end{equation}
Hence, \eqref{EEdif} yields \eqref{edif1},
completing the proof.
\end{proof}
\begin{remark}
\label{Rem2Sharp} Example 2.4 in \cite{AlMoSel} (representing a
version of Example \ref{Ex-AlMoSel} for $a=0$ and $b_1=b_2=b/\sqrt{2}$,
where $b\geq 0$) shows that the bound \eqref{edif1} is sharp.
\end{remark}

\vspace*{2mm} \noindent {\bf Acknowledgments.} We thank anonymous
referees for their comments and suggestions that allowed us to
improve the manuscript. A.\,K.\,Motovilov gratefully acknowledges the
kind hospitality of the Institut f\"ur Angewandte Mathematik,
Universit\"at Bonn, where a part of this research has been
con\-ducted. This work was supported by the Deutsche
For\-sch\-ungs\-gemeinschaft (DFG), the Heisenberg-Landau Program,
and the Russian Foundation for Basic Research.


\end{document}